\newtheorem{theorem}{Theorem}[section]
\newtheorem{lemma}{Lemma}[section]
\newenvironment{proof}{\noindent{\bf Proof:}}{\hfill\fbox{}\vspace*{1mm}}
\begin{document}
	
\title{Toeplitz Operators on Weighted Bergman Spaces over Tubular Domains}

\author{ Lvchang Li, Jiaqing Ding, Haichou Li}
\author{Lvchang Li \thanks
	{College of Mathematics and Informatics,
		South China Agricultural University,
		Guangzhou,
		510640,
		China
		 Email: 20222115006@stu.scau.edu.cn.},\
	Jiaqing Ding\thanks{College of Mathematics and Informatics,
		South China Agricultural University,
		Guangzhou,
		510640,
		China
	Email: 20222115001@stu.scau.edu.cn.},\
	Haichou Li \thanks{ Corresponding author,
		College of Mathematics and Informatics,
		South China Agricultural University,
		Guangzhou,
		510640,
		China
		 Email: hcl2016@scau.edu.cn.  Li is supported by NSF of China (Grant No. 12326407 and 12071155). }
}

\date{}
\maketitle
\begin{center}
	\begin{minipage}{120mm}
		\begin{center}{\bf Abstract}\end{center}
		{In this paper, we mainly study the necessary and sufficient conditions for the boundedness and compactness of Toeplitz operators on weighted Bergman spaces over a tubular domains by using the Carlson measures on tubular domains. We also give some related results about Carlson measures.	}	
			
		{\bf Key words}:\ \ weighted Bergman spaces; Toeplitz operator; tubular domain; Carleson measure
	\end{minipage}
\end{center}

\maketitle

\section{Introduction}
  \ \ \ \
  Bergman's book \cite{ber} systematically discusses a Hilbert space of square-integrable analytic functions on a 
  domain for the first time, now known as the Bergman space defined on a domain. The Bergman space is a closed subspace of the familiar $L^p$ space. When $p=2$, the Bergman space is a Hilbert space. A useful tool for studying the Bergman space is the reproducing kernel, which plays a very important role. For related theories on the reproducing kernel, please refer to relevant literatures\cite{aronszajn1950theory,ber,halmos2012hilbert,malyshev1995bergman}.

Another important tool in the study of operators on function spaces is the Carleson measure, which was initially introduced by Carleson \cite{carleson1958interpolation} to address the Corona problem. Nowadays, Carleson measures play a crucial role in studying the boundedness and compactness of operators, especially for Toeplitz operators, see \cite{zhu2005spaces,zhu2007operator}. Regarding the further applications of Carleson measures in operator theory on function spaces, refer to \cite{abate2011carleson,cima1982carleson,hastings1975carleson,luecking1983technique}.
The theory of Toeplitz operators on the unit disk and the unit ball in the Bergman spaces has been extensively studied by many authors, such as \cite{zhu2005spaces,zhu2007operator}. Subsequently, many authors have also extended the domains to bounded symmetric domains\cite{zhu1991hankel}, strongly pseudoconvex domains\cite{abate2019toeplitz,abate2012toeplitz,hu2016carleson}, pseudoconvex domains\cite{li2024carleson}, and so on.

 However, researches on the theory of the Bergman spaces on unbounded domains is scarce. In particular, when n=1, the Bergman spaces on the upper half-plane lacks many good properties of the Bergman spaces over the unit disk, such as the well-known constant functions and monomial functions not being in the Bergman spaces over the  upper half-plane. 

In the present paper, we are interested in the case of the higher dimensional unbounded domains, such as  the tubular domains. More specifically, this paper will mainly study the boundedness and compactness of Toeplitz operators on a certain class of tubular domains in $\mathbb{C}^n$ and their relationship with Carleson measures. These tubular domains may share some similarities with the well-known second kind of Siegel upper half-space, but the second kind of Siegel upper half-space are not tubes in $ \mathbb{C}^n$. Instead, the tubes are larger than the second kind of Siegel upper half-space, hence they have corresponding research value.

 Deng et al. \cite{deng2021reproducing} computed the reproducing kernel of the Bergman spaces on such tubes using Laplace transform methods, laying the groundwork for subsequent theoretical researches. Liu et al. \cite{jiaxin2023bergman} provided some basic properties of the Bergman spaces on these tubes.
Si et al. \cite{liu2020positive} studied the boundedness and compactness of Toeplitz operators on the Bergman space of the second kind of Siegel upper half-space and their relationship with Carleson measures. This paper will mainly study the theory of Toeplitz operators on the Bergman spaces over tubes with the help of Carleson measure, which is a  powerful tool and an interesting object to study.

The structure of the paper is as follows: The second section provides an overview of fundamental terminology. The third section presents essential lemmas and their proofs. In the fourth section, we obtain the characterization of Carleson measures on the tubular domains. Moving forward, in the fifth section, we can find a dense subspace of Bergman spaces over tubular domains, which is crucial for establishing the boundedness of Toeplitz operators. Finally, the last section comprehensively explores the characterizations of boundedness and compactness of Toeplitz operators on Bergman spaces over tubular domains, with detailed discussions on Theorem \ref{mianth1} and Theorem \ref{mianth2}.

\section{Preliminaries}

\ \ \ \
Let $\mathbb{C}^n$ be the $n$ dimensional complex Euclidean space. For any two points $z=\left( z_1,\cdots ,z_n \right) $ and $w=\left( w_1,\cdots ,w_n \right) $ in $\mathbb{C}^n,$ we write\[z\cdot \bar{w}:=z_1\bar{w}_1+\cdots +z_n\bar{w}_n,\] \[z^2=z\cdot z:=z_{1}^{2}+z_{2}^{2}+\cdots +z_{n}^{2}\] and \[\left| z \right|:=\sqrt{z\cdot \bar{z}}=\sqrt{\left| z_1 \right|^2+\cdots +\left| z_n \right|^2}.\]  

The set $\mathbb{B}_n=\left\{ z\in \mathbb{C}^n:\left| z \right|<1 \right\}$ will be called the unit ball of $\mathbb{C}^n$. 

The tubular domain $T_B$ of $\mathbb{C}^n$ with base $B$, is  defined as follows:

 $$ T_B=\left\{ z=x+iy \in \mathbb{C}^n | x\in \mathbb{R}^n, \  y\in B \subseteq  \mathbb{R}^n  \right\}, $$   
 where\[B=\left\{ \left( y',y_n \right)=( y_1,\cdots ,y_{n-1}, y_n) \in \mathbb{R}^n\left| y'^2:=y_{1}^{2}+\cdots +y_{n-1}^{2}<y_n \right. \right\}.\] 

We define the spaces $L_{\alpha}^{p}\left( T_B \right)$, which is composed of all Lebesgue measurable functions $f$ on $T_B$, and its norm \[\lVert f \rVert _{p,\alpha}=\left\{ \int_{T_B}{\left| f\left( z \right) \right|^p dV_\alpha\left( z \right)} \right\} ^{\frac{1}{p}}\] is finite, where $dV_\alpha(z)=( y_n-\left| y' \right|^2)^\alpha dV(z)$, $\alpha>-1$, $dV(z)$ denotes the Lebesgue measure on $\mathbb{C}^n$. 

The Bergman spaces $A_{\alpha}^{p}\left( T_B \right)$ on tube $T_B$  is a set composed of all holomorphic functions in $L_{\alpha}^{p}\left( T_B \right)$. 

Since the valuation functional is bounded, so the Bergman space $A_{\alpha}^{p}\left( T_B \right)$ is the closed subspace of $L_{\alpha}^{p}\left( T_B \right)$. At the same time, we know that when $1\le p<\infty $ the space $A_{\alpha}^{p}\left( T_B \right)$ is a Banach space with the norm $\lVert \cdot \rVert _{p,\alpha}.$ In particular, when $p=2,$ $A_{\alpha}^{2}\left( T_B \right)$ is a Hilbert space. 

 An very important orthogonal projection $P_{\alpha}$ from $L_{\alpha}^{2}\left( T_B \right)$ to $A_{\alpha}^{2}\left( T_B \right)$ is the following integral operator:\[P_{\alpha}f\left( z \right) =\int_{T_B}{K_\alpha\left( z,w \right) f\left( w \right) dV_\alpha\left( w \right)},\] with the Bergman kernel \[K_{\alpha}\left( z,w \right) =\frac{2^{n+1+2\alpha}\varGamma \left( n+1+\alpha \right)}{\varGamma \left( \alpha +1 \right) \pi ^n}\left( \left( z'-\overline{w'} \right) ^2-2i\left( z_n-\overline{w_n} \right) \right) ^{-n-\alpha -1}.\]  

For convenience, we introduce the following notation:
 \[\rho \left( z,w \right) =\frac{1}{4}\left( \left( z'-\overline{w'} \right) ^2-2i\left( z_n-\overline{w_n} \right) \right)\] and let $\rho \left( z \right) :=\rho \left( z,z \right) =y_n-y'^2.$ 

With the above notion $\rho \left( z,w \right)$, the weighted Bergman kernel of $T_B$ becomes
 \[K_{\alpha}\left( z,w \right) =\frac{\varGamma \left( n+\alpha +1 \right)}{2^{n+1}\pi ^n\varGamma \left( \alpha +1 \right) \rho \left( z,w \right) ^{n+\alpha +1}}.\]

Recall $\rho \left( z \right) =y_n-y'^2$ and let $\partial T_B:=\left\{ z\in \mathbb{C}^n\,\,: \rho \left( z \right) =0 \right\}$ denote the boundary of $T_B.$ Then $$\widehat{T_B}:=T_B\cup \partial T_B\cup \left\{ \infty \right\}$$ is the one-point compactification of $T_B.$

 Also, let $\partial \widehat{T_B}:=\partial T_B\cup \left\{ \infty \right\} .$ Thus, $z\rightarrow \partial \widehat{T_B}$ 
means $\rho \left( z \right) \rightarrow 0$ or $\left| z \right|\rightarrow \infty .$ 

We denote by $C_0\left( T_B \right) $ the space of complex-valued continuous functions $f$ on $T_B$ such that $f\left( z \right) \rightarrow 0$ as $z\rightarrow \partial \widehat{T_B}.$

For a positive Borel measure $\mu$ on $T_B$, we define a function $\tilde{\mu}$ on $T_B$ by \[\tilde{\mu}\left( z \right) :=\int_{T_B}{\left| k_z\left( w \right) \right|^2d\mu \left( w \right)}, \quad  z\in T_B,\] where, for fixed $z\in T_B$, \[k_z(w):= K(z,w)/\sqrt{K(z,z)}, \quad w\in T_B.\]

For $z\in T_B$ and $r>0$, we define the averaging function \[\hat{\mu}_r\left( z \right) :=\frac{\mu \left( D\left( z,r \right) \right)}{V_{\alpha}\left( D\left( z,r \right) \right)}.\]

Let $\mu$ be a positive Borel measure  on $T_B$ and $p>0$. We say that $\mu$ is Carleson measure for the Bergman Space $A_\alpha^{p}(T_B)$, if there exists a positive constant $C > 0$ such that
$$
\int_{T_B} \left| f(z) \right|^p \, d\mu(z) \leq C \lVert f \rVert_{p,\alpha}^p
$$
for every $f \in A_\alpha^{p}(T_B)$.

A positive Borel measure $\mu$ as a vanishing Carleson measure, if for any bounded sequence ${f_k}$ in $A_\alpha^{p}(T_B)$ that converges uniformly to $0$ on every compact subset of $T_B$, we have \[\lim_{k\rightarrow \infty} \int_{T_B}{\left| f_k \right|^p d\mu}=0.\]

First, we review the Bergman metric on domains in $\mathbb{C}^n$. Let $K(z,w)$ be the kernel of $T_{B}$. We define the complex matrix 
$$
\mathbf{B}(z)=(b_{ij}(z))_{1\leq i,j\leq n}=\frac{1}{n+1}\left ( \frac{\partial ^2}{\partial \bar{z_i}\partial z_j}\ln{K(z,z)}\right )_{1\leq i,j\leq n}
$$
as the Bergman matrix of $T_{B}$. 

For a smooth curve $\gamma: \left[ 0,1 \right] \rightarrow T_B$, we define 
$$l(\gamma )=\int_{0}^{1}\left \langle \mathbf{B}(\gamma (t))\gamma '(t),\gamma '(t)\right \rangle dt.$$

Based on the definition of  $l(\gamma )$, we can define the Bergman metric $\beta$ on $T_{B}$ as follows:
$$\beta (z,w)=\inf \{l(\gamma ):\gamma (0)=1,\ \gamma (1)=w\}.$$

Let $D\left( z,r \right) $ denote the Bergman metric ball at $z$ with radius $r$, that is 
 \[D(z,r)=\left\{w \in T_B:  \beta(z,w)<r\right\}.\]

We will use the important  transform $\varPhi :\mathbb{B}_{n}\rightarrow T_{B}$ given by
$$\varPhi(z)=\left ( \frac{\sqrt{2}z'}{1+z_{n}}, \ i\frac{1-z_n}{1+z_n}-i\frac{z' \cdot z'}{(1+z_n)^2} \right ), \quad z\in \mathbb{B}_n$$
and it is not hard to calculate that 
$$\varPhi^{-1}(w)=\left ( \frac{2iw'}{i+w_{n}+\frac{i}{2}w'\cdot w'},\  \frac{i-w_n-\frac{i}{2}w'\cdot w'}{i+w_{n}+\frac{i}{2}w'\cdot w'} \right ), \quad w\in T_{B}.$$
The mapping $\varPhi$ is a biholomorphic map from $\mathbb{B}_n$ to $T_B$ and also a key tool for this paper.

In Krantz's book \cite{krantz2001function}, there is the following proposition \cite[proposition 1.4.12]{krantz2001function}:

Let $\varOmega _1,\ \varOmega _2\subseteq \mathbb{C}^n$ be domains and $ f:\varOmega _1\rightarrow \varOmega _2$ a biholomorphic mapping. Then $f$ induces an isometry of Bergman metrics:$$\beta _{\varOmega _1}\left( z,w \right) =\beta _{\varOmega _2}\left( f\left( z \right) ,f\left( w \right) \right) $$ for all $z,w\in \varOmega _1$.

Hence, taking $\varOmega _1=T_B$ and $\varOmega _2=\mathbb{B}_n$, we have:$$
\beta _{T_B}\left( z,w \right) =\beta _{\mathbb{B}_n}\left( \Phi ^{-1}\left( z \right) ,\Phi ^{-1}\left( w \right) \right) =\tanh ^{-1}\left( \left| \varphi _{\Phi ^{-1}\left( z \right)}\left( \Phi ^{-1}\left( w \right) \right) \right| \right) .
$$
A computation shows that
$$
\beta_{T_B}(z, w)=\tanh ^{-1} \sqrt{1-\frac{\rho(z) \rho(w)}{|\rho(z, w)|^2}} .
$$
Throughout the paper we use C to denote a positive constant whose value may change from line to line but does not depend on the functions being considered. The notation $A\lesssim B$ means that there is a positive constant C such that $A\le CB$, and the notation $A\simeq B$ means that $A\lesssim B$ and $B\lesssim A$.
\section{Main lemmas}
\ \ \ \
To prove our main results, we need the following key lemmas, where Lemmas \ref{lem duliang}-\ref{lem yingshe de xingzhi} are from \cite{jiaxin2023bergman}. They play a crucial role as instrumental lemmas in the present paper.

\begin{lemma}\label{lem duliang}
	There exists a positive integer $N$ such that for any $0<r\le1$ we can find a sequence $\{a_k\}$ in $T_B$ with the following properites:\\
	
	$(1)$ $T_B=\bigcup_{k=1}^{\infty}{D\left( a_k,r \right)};$
	
	$(2)$ The sets $D(a_k,r/4)$ are mutually disjoint;
	
	$(3)$Each point $z\in T_B$ belongs to at most $N$ of the sets $D(a_k,2r)$.
	
\end{lemma}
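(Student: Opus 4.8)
The natural strategy is to transfer the problem to the unit ball $\mathbb{B}_n$ via the biholomorphism $\varPhi$, where the analogous covering lemma is classical (see Zhu's book). Since $\varPhi\colon\mathbb{B}_n\to T_B$ is a biholomorphism and, by Krantz's proposition quoted above, an isometry of Bergman metrics, it maps Bergman metric balls to Bergman metric balls: $\varPhi(D_{\mathbb{B}_n}(u,r)) = D_{T_B}(\varPhi(u),r)$ for every $u\in\mathbb{B}_n$ and $r>0$. Therefore the plan is: first invoke the known lattice result on $\mathbb{B}_n$ — there is an integer $N$ such that for each $0<r\le 1$ one can choose $\{u_k\}\subset\mathbb{B}_n$ with $\mathbb{B}_n=\bigcup_k D_{\mathbb{B}_n}(u_k,r)$, the balls $D_{\mathbb{B}_n}(u_k,r/4)$ pairwise disjoint, and every point lying in at most $N$ of the $D_{\mathbb{B}_n}(u_k,2r)$. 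Then set $a_k:=\varPhi(u_k)$ and push all three properties through $\varPhi$.

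Property (1) is immediate: applying $\varPhi$ to $\mathbb{B}_n=\bigcup_k D_{\mathbb{B}_n}(u_k,r)$ and using surjectivity of $\varPhi$ together with $\varPhi(D_{\mathbb{B}_n}(u_k,r))=D(a_k,r)$ gives $T_B=\bigcup_k D(a_k,r)$. Property (2): since $\varPhi$ is a bijection, disjointness of the $D_{\mathbb{B}_n}(u_k,r/4)$ is equivalent to disjointness of their images $D(a_k,r/4)$. Property (3): for $z\in T_B$ write $z=\varPhi(u)$ with $u\in\mathbb{B}_n$; then $z\in D(a_k,2r)$ iff $\beta_{T_B}(z,a_k)<2r$ iff $\beta_{\mathbb{B}_n}(u,u_k)<2r$ iff $u\in D_{\mathbb{B}_n}(u_k,2r)$, so the number of $k$ with $z\in D(a_k,2r)$ equals the number of $k$ with $u\in D_{\mathbb{B}_n}(u_k,2r)$, which is at most $N$. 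The constant $N$ is exactly the one from the ball, independent of $r$.

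The only genuine point requiring care — and the one I would single out as the main obstacle — is justifying $\varPhi(D_{\mathbb{B}_n}(u,r))=D_{T_B}(\varPhi(u),r)$ cleanly from the isometry statement. The isometry $\beta_{T_B}(\varPhi(u),\varPhi(v))=\beta_{\mathbb{B}_n}(u,v)$ gives $\varPhi(v)\in D(\varPhi(u),r)\iff \beta_{T_B}(\varPhi(u),\varPhi(v))<r\iff\beta_{\mathbb{B}_n}(u,v)<r\iff v\in D_{\mathbb{B}_n}(u,r)$, and since $\varPhi$ is onto $T_B$ this yields the set equality; this is routine but should be stated explicitly since everything else rests on it. Alternatively, if one prefers to avoid citing the ball result, one can reprove it intrinsically on $T_B$: the Bergman metric $\beta_{T_B}$ is a genuine metric, the balls $D(z,r)$ are open, and a maximal $r/4$-separated set $\{a_k\}$ (which exists by a Zorn's lemma argument, and is automatically countable since $T_B$ is separable) has the $r/4$-balls disjoint by maximality of separation, covers $T_B$ with the $r$-balls by maximality, and the bounded-overlap property (3) follows from a volume-comparison argument using that $V_\alpha(D(z,2r))\simeq V_\alpha(D(z,r/4))$ with constants depending only on $n,\alpha,r$ for $r\le 1$ — but the transfer argument via $\varPhi$ is shorter and is the approach I would adopt here, matching the way the companion lemmas are cited from \cite{jiaxin2023bergman}.
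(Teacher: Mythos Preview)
Your argument is correct. The paper itself does not prove this lemma at all: it simply records Lemma~\ref{lem duliang} among the results imported from \cite{jiaxin2023bergman} (see the sentence preceding the lemma, ``Lemmas \ref{lem duliang}--\ref{lem yingshe de xingzhi} are from \cite{jiaxin2023bergman}''), so there is no in-paper proof to compare against. Your transfer argument via the Bergman-metric isometry $\varPhi\colon\mathbb{B}_n\to T_B$ is the natural one and is fully rigorous as you have written it: the equality $\varPhi(D_{\mathbb{B}_n}(u,r))=D_{T_B}(\varPhi(u),r)$ follows directly from the isometry identity $\beta_{T_B}(z,w)=\beta_{\mathbb{B}_n}(\varPhi^{-1}(z),\varPhi^{-1}(w))$ already recorded in the paper, and the three properties then pull back verbatim from Zhu's covering lemma on $\mathbb{B}_n$. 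The alternative intrinsic route you sketch (maximal $r/4$-separated set plus a volume-comparison for the finite overlap) is also standard and would work, but the transfer argument is cleaner here given the machinery already in place.
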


\begin{lemma}\label{lem dengjia}
	For any $r>0$, the inequalities 
	$$\left| \rho \left( z,u \right) \right|\simeq \left| \rho \left( z,v \right) \right|$$
	hold for all $z,u,v\in T_{B} $ with $\beta (u,v) <r .$
\end{lemma}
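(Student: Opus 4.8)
The plan is to transfer the problem to the unit ball $\mathbb{B}_n$ via the biholomorphism $\varPhi$, where the analogous comparison is classical. The first ingredient is a transformation law for $\rho$ under $\varPhi$. Applying the standard change-of-variables formula for Bergman kernels, $K_{\mathbb{B}_n}(\zeta,\eta)=\det\varPhi'(\zeta)\,\overline{\det\varPhi'(\eta)}\,K_{T_B}(\varPhi(\zeta),\varPhi(\eta))$, to the explicit kernels $K_{\mathbb{B}_n}(\zeta,\eta)=\tfrac{n!}{\pi^n}(1-\langle\zeta,\eta\rangle)^{-n-1}$ and (taking $\alpha=0$, which is legitimate since $\rho$ carries no $\alpha$) $K_0(z,w)=\tfrac{n!}{2^{n+1}\pi^n}\rho(z,w)^{-n-1}$, and then taking absolute values in the resulting identity for $(n+1)$-st powers, I obtain
\[ \bigl|\rho(\varPhi(\zeta),\varPhi(\eta))\bigr|=G(\zeta)\,G(\eta)\,\bigl|1-\langle\zeta,\eta\rangle\bigr|,\qquad G(\zeta):=2^{-1/2}\bigl|\det\varPhi'(\zeta)\bigr|^{1/(n+1)}>0 , \]
with $G$ continuous and nowhere zero on $\mathbb{B}_n$; taking $\eta=\zeta$ also gives $G(\zeta)^2=\rho(\varPhi(\zeta))/(1-|\zeta|^2)$.

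With $\zeta=\varPhi^{-1}(z)$, $\xi=\varPhi^{-1}(u)$, $\eta=\varPhi^{-1}(v)$, the Bergman-metric isometry recalled above says $\beta_{T_B}(u,v)<r$ is the same as $\eta\in D_{\mathbb{B}_n}(\xi,r)$, and the transformation law turns the claim into controlling
\[ \frac{|\rho(z,u)|}{|\rho(z,v)|}=\frac{G(\xi)}{G(\eta)}\cdot\frac{|1-\langle\zeta,\xi\rangle|}{|1-\langle\zeta,\eta\rangle|}. \]
The second factor is $\simeq 1$ uniformly in $\zeta$ by the classical ball estimates: $|1-\langle\zeta,\xi\rangle|\simeq|1-\langle\zeta,\eta\rangle|$ and $1-|\xi|^2\simeq 1-|\eta|^2$ whenever $\eta\in D_{\mathbb{B}_n}(\xi,r)$, with constants depending only on $r$ and $n$. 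For the first factor, $G(\xi)^2/G(\eta)^2=\bigl(\rho(u)/\rho(v)\bigr)\cdot\bigl((1-|\eta|^2)/(1-|\xi|^2)\bigr)$, so I only need $\rho(u)\simeq\rho(v)$. This I would prove intrinsically in $T_B$: expanding $\rho(a,b)$ into real and imaginary parts and using $|y'|^2<y_n$ on $B$ gives the elementary identity $\operatorname{Re}\rho(a,b)=\tfrac14\bigl(|x'_a-x'_b|^2+|y'_a-y'_b|^2\bigr)+\tfrac12(\rho(a)+\rho(b))$, hence $|\rho(u,v)|\ge\operatorname{Re}\rho(u,v)\ge\tfrac12\max\{\rho(u),\rho(v)\}$; on the other hand the displayed formula for $\beta_{T_B}$ shows $\beta_{T_B}(u,v)<r$ is equivalent to $|\rho(u,v)|^2<\cosh^2(r)\,\rho(u)\rho(v)$, and combining the two gives $\rho(u)<4\cosh^2(r)\,\rho(v)$ and, symmetrically, the reverse. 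Putting everything together, the ratio above is bounded above and below by constants depending only on $r$ and $n$, and interchanging $u$ and $v$ gives both inequalities in $|\rho(z,u)|\simeq|\rho(z,v)|$.

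The only non-formal step is the transformation law of the first paragraph, and even that is routine once the Bergman-kernel change-of-variables formula is invoked; the point to watch is that the holomorphic Jacobian factors do not cancel exactly but only modulo the ratio $\rho(u)/\rho(v)$, which is precisely why the ``diagonal'' comparison $\rho(u)\simeq\rho(v)$ is needed. If one prefers an argument that never leaves $T_B$, the alternative is to establish the quasi-triangle inequality $|\rho(z,w)|^{1/2}\lesssim|\rho(z,u)|^{1/2}+|\rho(u,w)|^{1/2}$ for the Kor\'anyi-type quasi-distance $|\rho(\cdot,\cdot)|^{1/2}$ and then combine it with $|\rho(u,v)|\simeq\rho(u)\simeq\rho(v)$ and $|\rho(z,w)|\ge\tfrac12\rho(w)$; in that route the computational core is the quasi-triangle inequality, whose proof has to handle the coupling between the vertical coordinate $x_n$ and the height $\rho$ in the way familiar from the Heisenberg group.
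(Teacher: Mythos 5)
Your proposal is correct, but note that the paper itself offers no proof of this lemma: it is imported verbatim from \cite{jiaxin2023bergman} (Lemmas \ref{lem duliang}--\ref{lem yingshe de xingzhi} are all cited there), so there is no in-paper argument to compare with. What you supply is a complete, self-contained proof built from ingredients the paper already records: your transformation law $\left| \rho \left( \varPhi \left( \zeta \right) ,\varPhi \left( \eta \right) \right) \right|=G\left( \zeta \right) G\left( \eta \right) \left| 1-\left< \zeta ,\eta \right> \right|$ is exactly Lemma \ref{lem yingshe de xingzhi}(3)--(4) (your kernel change-of-variables derivation reproduces it, and your $G$ agrees with $1/\left| 1+\zeta _n \right|$ computed from the Jacobian in that lemma); the reduction via the Bergman-metric isometry $\beta _{T_B}\left( u,v \right) =\beta _{\mathbb{B}_n}\left( \xi ,\eta \right)$ is the Krantz proposition quoted in Section 2; and your two auxiliary facts check out: the real-part identity $\operatorname{Re}\rho \left( a,b \right) =\tfrac{1}{4}\left( \left| x_{a}^{\prime}-x_{b}^{\prime} \right|^2+\left| y_{a}^{\prime}-y_{b}^{\prime} \right|^2 \right) +\tfrac{1}{2}\left( \rho \left( a \right) +\rho \left( b \right) \right)$ is verified by direct expansion (and incidentally gives a cleaner proof of Lemma \ref{lem qu bian liang} than the paper's kernel-estimate argument), and the equivalence $\beta _{T_B}\left( u,v \right) <r\Leftrightarrow \left| \rho \left( u,v \right) \right|^2<\cosh ^2\left( r \right) \rho \left( u \right) \rho \left( v \right)$ follows correctly from the displayed formula for $\beta _{T_B}$, yielding $\rho \left( u \right) \simeq \rho \left( v \right)$ with constants depending only on $r$. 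The only external input is the classical unit-ball comparison $\left| 1-\left< \zeta ,\xi \right> \right|\simeq \left| 1-\left< \zeta ,\eta \right> \right|$ and $1-\left| \xi \right|^2\simeq 1-\left| \eta \right|^2$ for $\beta _{\mathbb{B}_n}\left( \xi ,\eta \right)<r$ uniformly in $\zeta$ (Zhu's Lemma 2.20); you should cite it explicitly rather than call it classical, but with that citation the argument is complete, and it is quite possibly close in spirit to the proof in \cite{jiaxin2023bergman}, which also works by transporting ball estimates through $\varPhi$.
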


\begin{lemma}\label{lem ceByuanpan}
	For any $z\in T_B$ and $r>0$ we have \[V_{\alpha}\left( D\left( z,r \right) \right) \simeq \rho \left( z \right) ^{n+\alpha +1}.\]
\end{lemma}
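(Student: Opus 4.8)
The plan is to transport the problem to the unit ball $\mathbb{B}_n$ via the biholomorphism $\varPhi$, where the analogous volume estimate $V\bigl(D_{\mathbb{B}_n}(\zeta,r)\bigr)\simeq(1-|\zeta|^2)^{n+1}$ is classical (see \cite{zhu2005spaces}). Fix $z\in T_B$ and set $\zeta=\varPhi^{-1}(z)\in\mathbb{B}_n$. Since $\varPhi$ is an isometry for the Bergman metrics (recalled in the preliminaries), $w\in D(z,r)$ holds exactly when $\varPhi^{-1}(w)\in D_{\mathbb{B}_n}(\zeta,r)$, i.e. $D(z,r)=\varPhi\bigl(D_{\mathbb{B}_n}(\zeta,r)\bigr)$. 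I would also record the two elementary kernel identities $K_{T_B}(w,w)=c_n\,\rho(w)^{-(n+1)}$ (the kernel formula of Section~2 with $\alpha=0$) and $K_{\mathbb{B}_n}(\eta,\eta)=\tfrac{n!}{\pi^n}(1-|\eta|^2)^{-(n+1)}$.

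Next I apply the change of variables $w=\varPhi(\eta)$ to $dV_\alpha(w)=\rho(w)^\alpha\,dV(w)$, using the Jacobian identity for biholomorphisms $|\det\varPhi'(\eta)|^2=K_{\mathbb{B}_n}(\eta,\eta)/K_{T_B}(\varPhi(\eta),\varPhi(\eta))$. This gives
$$V_\alpha(D(z,r))=\int_{D_{\mathbb{B}_n}(\zeta,r)}\rho(\varPhi(\eta))^\alpha\,\frac{K_{\mathbb{B}_n}(\eta,\eta)}{K_{T_B}(\varPhi(\eta),\varPhi(\eta))}\,dV(\eta)=c_n'\int_{D_{\mathbb{B}_n}(\zeta,r)}\frac{\rho(\varPhi(\eta))^{\,n+\alpha+1}}{(1-|\eta|^2)^{n+1}}\,dV(\eta),$$
where the last step just substitutes the two kernel formulas. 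Now I use that over a fixed Bergman ball the relevant quantities are comparable to their values at the center: $1-|\eta|^2\simeq 1-|\zeta|^2$ for $\eta\in D_{\mathbb{B}_n}(\zeta,r)$ (a standard unit-ball estimate, see \cite{zhu2005spaces}), and $\rho(\varPhi(\eta))\simeq\rho(\varPhi(\zeta))=\rho(z)$. The latter follows from Lemma~\ref{lem dengjia}: taking $z=u=\varPhi(\eta)$ and then $u=\varPhi(\eta),v=\varPhi(\zeta)$ yields $\rho(\varPhi(\eta))=|\rho(\varPhi(\eta),\varPhi(\eta))|\simeq|\rho(\varPhi(\eta),\varPhi(\zeta))|\simeq|\rho(\varPhi(\zeta),\varPhi(\zeta))|=\rho(\varPhi(\zeta))$ whenever $\beta_{T_B}(\varPhi(\eta),\varPhi(\zeta))=\beta_{\mathbb{B}_n}(\eta,\zeta)<r$.

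Putting these together, the integrand above is comparable to $\rho(z)^{\,n+\alpha+1}(1-|\zeta|^2)^{-(n+1)}$ uniformly on $D_{\mathbb{B}_n}(\zeta,r)$, so
$$V_\alpha(D(z,r))\simeq\frac{\rho(z)^{\,n+\alpha+1}}{(1-|\zeta|^2)^{n+1}}\,V\bigl(D_{\mathbb{B}_n}(\zeta,r)\bigr)\simeq\frac{\rho(z)^{\,n+\alpha+1}}{(1-|\zeta|^2)^{n+1}}\,(1-|\zeta|^2)^{n+1}=\rho(z)^{\,n+\alpha+1},$$
with all comparison constants depending only on $n$, $\alpha$ and $r$, as required. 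There is no single deep step here; the main thing to get right is the bookkeeping in the change of variables — the Jacobian–kernel identity and the exact powers of $\rho$ and $1-|\eta|^2$ — together with citing the correct elementary facts about Bergman balls in $\mathbb{B}_n$. An alternative avoiding $\mathbb{B}_n$ is to use the affine automorphisms of $T_B$ (real translations $z\mapsto z+a$ with $a\in\mathbb{R}^n$, the rotations of the $z'$-variables, the parabolic maps, and the dilations $(z',z_n)\mapsto(tz',t^2z_n)$): these preserve $\beta$, scale $\rho$ by $t^2$ and $dV_\alpha$ by $t^{2(n+\alpha+1)}$, and act transitively on $T_B$, so the quotient $V_\alpha(D(z,r))/\rho(z)^{n+\alpha+1}$ is constant and one only needs its finiteness and positivity at the base point $(0,\dots,0,i)$; however this still requires the same verification of how $dV_\alpha$ and $\rho$ transform, so I would present the $\varPhi$-pullback argument as the cleaner route.
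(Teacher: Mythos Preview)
The paper does not give its own proof of this lemma: it is imported from \cite{jiaxin2023bergman} together with Lemmas \ref{lem duliang}, \ref{lem dengjia} and \ref{lem yingshe de xingzhi}, so there is nothing in the present paper to compare against. Your argument is correct: pulling back via $\varPhi$, using the Bergman-kernel transformation law for the Jacobian, invoking Lemma \ref{lem dengjia} to freeze $\rho(\varPhi(\eta))\simeq\rho(z)$, and then cancelling against the classical estimate $V\bigl(D_{\mathbb{B}_n}(\zeta,r)\bigr)\simeq(1-|\zeta|^2)^{n+1}$ is a clean route with the bookkeeping done right. Your alternative via the automorphisms $\sigma_z=\delta_{\rho(z)^{-1/2}}\circ h_z$ is in fact closer to the style the paper uses elsewhere (see the proofs of Lemmas \ref{lem qu bian liang} and \ref{lem ci tiao he fang da}); it gives the slightly sharper conclusion that $V_\alpha(D(z,r))/\rho(z)^{n+\alpha+1}$ is literally independent of $z$, not just two-sided bounded, and avoids quoting the unit-ball volume estimate.
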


\begin{lemma}
	Let $a,b$ and $c\in \mathbb{R},$ $1\leqslant p<\infty$ and \[Tf\left( z \right) =\rho \left( z \right) ^a\int_{T_B}{\frac{\rho \left( w \right) ^b}{\rho \left( z,w \right) ^c}f\left( w \right) dV\left( w \right)}\] and \[Sf\left( z \right) =\rho \left( z \right) ^a\int_{T_B}{\frac{\rho \left( w \right) ^b}{\left| \rho \left( z,w \right) \right|^c}f\left( w \right) dV\left( w \right)}.\] Then the following conditions are equivalent for any real $\alpha$.
	
	$(1)$ The operator $S$ is bounded on $L_{\alpha}^{p}\left( T_B \right).$
	
	$(2)$ The operator $T$ is bounded on $L_{\alpha}^{p}\left( T_B \right).$
	
	$(3)$ The parameters satisfy $-p\alpha<\alpha+1<p(b+1)$ and $c=n+\alpha+b+1.$\\
		When $p=\infty$, condition $(3)$ should be $a>0$, $b>-1$, and $c=n+a+b+1$.
	
\end{lemma}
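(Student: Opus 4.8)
The plan is to establish the chain $(3)\Rightarrow(1)\Rightarrow(2)\Rightarrow(3)$ (for $1\le p<\infty$; $p=\infty$ is an easier variant, indicated along the way). The implication $(1)\Rightarrow(2)$ is immediate: for real $c$ one has $|\rho(z,w)^{-c}|=|\rho(z,w)|^{-c}$ (and in fact $\operatorname{Re}\rho(z,w)>0$ on $T_B\times T_B$, since $4\operatorname{Re}\rho(z,w)=|x'-u'|^2+2y_n+2v_n-|y'+v'|^2$ and $|y'+v'|^2\le 2|y'|^2+2|v'|^2<2y_n+2v_n$), whence $|Tf|\le S(|f|)$ pointwise and $\|Tf\|_{p,\alpha}\le\|S\|\,\|f\|_{p,\alpha}$. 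The single analytic tool used throughout is the scalar estimate
\[
\int_{T_B}\frac{\rho(w)^{\sigma}}{|\rho(z,w)|^{\tau}}\,dV(w)\simeq\rho(z)^{\,n+1+\sigma-\tau}\qquad(\sigma>-1,\ \tau>n+1+\sigma),
\]
the integral being merely bounded when $\tau<n+1+\sigma$ and logarithmically divergent when $\tau=n+1+\sigma$; if this is not already in \cite{jiaxin2023bergman} it follows by transporting the integral to $\mathbb B_n$ via $\varPhi$ and invoking the classical Forelli--Rudin estimates \cite{zhu2005spaces}.

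For $(3)\Rightarrow(1)$ I would apply Schur's test to $S$ written as $Sf(z)=\int_{T_B}H(z,w)f(w)\,dV_\alpha(w)$ with positive kernel $H(z,w)=\rho(z)^a\rho(w)^{b-\alpha}|\rho(z,w)|^{-c}$ on $L^p(T_B,dV_\alpha)$, using the test weight $h(z)=\rho(z)^{-t}$. Evaluating the two Schur integrals $\int H(z,w)h(w)^{p'}dV_\alpha(w)$ and $\int H(z,w)h(z)^{p}dV_\alpha(z)$ by the estimate above and matching powers of $\rho$, they come out comparable to $h(z)^{p'}$ and $h(w)^{p}$ respectively precisely when $c=n+a+b+1$ and $t$ lies in the interval $\bigl(\max\{-a/p',\,(\alpha-b)/p\},\ \min\{(b+1)/p',\,(a+\alpha+1)/p\}\bigr)$; the four endpoint inequalities reduce to $-pa<\alpha+1$, $\alpha+1<p(b+1)$, and $a+b+1>0$ (the last being the sum of the first two), so $(3)$ is exactly the condition that this interval be nonempty. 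For $p=1$ one uses instead the Tonelli form of Schur's test ($\sup_w\int_{T_B}H(z,w)\,dV_\alpha(z)<\infty$), again evaluated by the scalar estimate. For $p=\infty$ one estimates directly $|Sf(z)|\lesssim\|f\|_\infty\,\rho(z)^a\int_{T_B}\rho(w)^b|\rho(z,w)|^{-c}dV(w)\simeq\|f\|_\infty\,\rho(z)^{a+n+1+b-c}$, which is finite and equal to $\|f\|_\infty$ exactly when $b>-1$, $c>n+1+b$ (i.e. $a>0$) and $c=n+a+b+1$.

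For $(2)\Rightarrow(3)$ I would first get $c=n+a+b+1$ by an anisotropic dilation. The map $\sigma_\delta(z',z_n)=(\delta z',\delta^2 z_n)$, $\delta>0$, is an automorphism of $T_B$ with $\rho(\sigma_\delta z)=\delta^2\rho(z)$, $\rho(\sigma_\delta z,\sigma_\delta w)=\delta^2\rho(z,w)$ and $dV\circ\sigma_\delta=\delta^{2n+2}dV$; hence $T(f\circ\sigma_\delta)=\delta^{2(n+1+a+b-c)}(Tf)\circ\sigma_\delta$ while $\|f\circ\sigma_\delta\|_{p,\alpha}=\delta^{2(n+1+\alpha)/p}\|f\|_{p,\alpha}$, so boundedness uniform in $\delta$ forces $n+1+a+b-c=0$. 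It then suffices to show $\alpha+1<p(b+1)$, since the adjoint $T^{*}$ (computed via $\overline{\rho(z,w)}=\rho(w,z)$) is an operator of the same shape with $(a,b,c)$ replaced by $(b-\alpha,\,a+\alpha,\,c)$ acting on $L^{p'}_\alpha$, and the inequality $\alpha+1<p(b+1)$ applied to it reads $\alpha+1<p'(a+\alpha+1)$, equivalently $-pa<\alpha+1$. To prove $\alpha+1<p(b+1)$: if $\alpha+1>p(b+1)$, pick $\beta\in(-(\alpha+1)/p,\,-(b+1)]$; then $f_\beta(w)=\rho(w)^{\beta}\mathbf 1_{\{\rho(w)<1,\ |w|<2\}}\in L^p_\alpha$, yet $b+\beta\le-1$ makes $\int_{T_B}\rho(w)^{b+\beta}|\rho(z,w)|^{-c}\,dV(w)=\infty$ for every $z$ (the kernel being bounded below on that bounded set), so $T$ is not even well-defined on $L^p_\alpha$; the borderline $\alpha+1=p(b+1)$ is killed by the same device with $\rho(w)^{-(b+1)}\bigl(\log\tfrac{2}{\rho(w)}\bigr)^{-\gamma}\mathbf 1_{\{\rho(w)<1,\ |w|<2\}}$, which lies in $L^p_\alpha$ for $\gamma>1/p$ but is sent to $\infty$ when $\gamma\le1$, so a valid $\gamma$ exists whenever $p>1$; the case $p=1$ follows directly from divergence of $\sup_w\int_{T_B}H(z,w)\,dV_\alpha(z)$ when $b\le\alpha$.

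The main obstacle I anticipate is this last part — the necessity of the \emph{strict} inequalities. The equality $c=n+a+b+1$ drops out of scaling almost for free, but excluding the two borderline equalities requires the logarithmically-refined test functions above, and in either direction one must guard against cancellation in the holomorphic kernel $\rho(z,w)^{-c}$ — handled either by passing to the positive operator $S$ or by localizing to small Bergman balls where $\arg\rho(z,w)$ is small. A prerequisite for everything is the sharp scalar estimate displayed above, with its three regimes in $\tau$, on which all subsequent computations rest.
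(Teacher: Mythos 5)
Two remarks before the substance: the paper itself gives no proof of this lemma --- it sits in the block ``Lemmas \ref{lem duliang}--\ref{lem yingshe de xingzhi} are from \cite{jiaxin2023bergman}'' and is simply imported, so your argument cannot be checked against an in-paper proof; what you propose (Schur's test for sufficiency, anisotropic dilation plus boundary test functions for necessity) is the standard route for such Forelli--Rudin-type lemmas on unbounded domains and is sound in outline. Note also that what you actually establish is the \emph{corrected} form of condition $(3)$: your computation forces $c=n+a+b+1$ and $-pa<\alpha+1<p(b+1)$, whereas the statement as printed has $c=n+\alpha+b+1$ and $-p\alpha<\alpha+1$; the printed version must contain typos ($\alpha$ for $a$), since your scaling identity shows $c$ has to involve $a$, and the $p=\infty$ clause of the lemma itself is written with $a$. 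Your Schur computation with $h=\rho^{-t}$ is correct: the four endpoint constraints do reduce to $-pa<\alpha+1$, $\alpha+1<p(b+1)$, $a+b+1>0$, and the scalar estimate you rely on is exactly the second half of Lemma \ref{lem jifendengshi}. (Two harmless slips: the exponents in $T(f\circ\sigma_\delta)=\delta^{2(n+1+a+b-c)}(Tf)\circ\sigma_\delta$ and $\|f\circ\sigma_\delta\|_{p,\alpha}=\delta^{2(n+1+\alpha)/p}\|f\|_{p,\alpha}$ both have the wrong sign; the conclusion $c=n+1+a+b$ is unaffected.)

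The one genuine gap is the endpoint necessity at $p=1$. Your fallback --- ``divergence of $\sup_w\int_{T_B}H(z,w)\,dV_\alpha(z)$'' --- is the $L^1$ boundedness criterion for the \emph{positive} operator $S$, not for $T$: when $b=\alpha$ it shows $S$ is unbounded, but says nothing by itself about $T$, whose complex kernel could in principle produce cancellation in the $w$-integral; and your power/log test functions cannot be used here, since at $p=1$, $b=\alpha$ the log device needs $1/p<\gamma\le 1$, which is empty only formally at $\gamma=1$ but in fact gives no admissible $\gamma$ with the required divergence. The fix is precisely the localization you mention only in passing at the end: test $T$ on $f=\chi_{D(w_0,r)}$ with $r$ small, prove (via Lemma \ref{lem dengjia} and a phase estimate showing $\rho(z,w)/\rho(z,w_0)$ stays in a small sector for $w\in D(w_0,r)$, uniformly in $z$) that $|Tf(z)|\gtrsim\rho(z)^a\rho(w_0)^{b}\,|\rho(z,w_0)|^{-c}\,V(D(w_0,r))$, and then conclude $\|Tf\|_{1,\alpha}=\infty$ from the logarithmic borderline case of Lemma \ref{lem jifendengshi}; this single-bump argument in fact yields the sharp condition $b>\alpha$ for $p=1$ (for $p>1$ it is weaker than $\alpha+1<p(b+1)$, so there your boundary-concentrated test functions remain the right tool). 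The same phase-control remark should be made explicit wherever you pass from ``the defining integral is not absolutely convergent'' to ``$T$ is not bounded,'' and when you identify the Banach-space adjoint with the integral operator of parameters $(b-\alpha,a+\alpha,c)$, since that identification uses Fubini on absolutely convergent pairings over nice dense classes before the test functions are inserted.
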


\begin{lemma}\label{lem jifendengshi}
	Let $r, s>0, t>-1,$ and $r+s-t>n+1$, then 
	$$
	\int_{T_{B}}\frac{{\rho (w)}^{t}}{{\rho (z,w)}^{r}{\rho (w,u)}^{s}}dV(w)=\frac{C_{1}(n,r,s,t)}{\rho (z,u)^{r+s-t-n-1}}
	$$
	for all $z,u \in T_{B}$, where
	$$C_{1}(n,r,s,t)=\frac{2^{n+1}{\pi}^{n}\Gamma (1+t)\Gamma (r+s-t-n-1)}{\Gamma (r)\Gamma (s)}.$$
	In particular, let $s,t\in \mathbb{R}$, if $t>-1,s-t>n+1$, then \[\int_{T_B}{\frac{\rho \left( w \right) ^t}{\left| \rho \left( z,w \right) \right|^s}}dV\left( w \right) =\frac{C_1\left( n,s,t \right)}{\rho \left( z \right) ^{s-t-n-1}},\] Otherwise, the above equation is infinity.
\end{lemma}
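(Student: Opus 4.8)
The plan is to treat both sides as holomorphic functions of the parameters $(r,s,t)$ and pin them down on a sufficiently large set. Write $F(r,s,t)=\int_{T_B}\rho(w)^{t}\rho(z,w)^{-r}\rho(w,u)^{-s}\,dV(w)$ and $G(r,s,t)=C_{1}(n,r,s,t)\,\rho(z,u)^{-(r+s-t-n-1)}$. A short elementary computation with $w=x+iy$ gives $\operatorname{Re}\rho(z,w)\ge\tfrac12\bigl(\rho(z)+\rho(w)\bigr)>0$ for all $z,w\in T_B$, so in particular $|\rho(z,w)|\ge\tfrac12\rho(z)$, the principal branch of $\rho(z,w)^{-r}$ is unambiguous, and $|\rho(z,w)^{-r}|\le e^{\pi|\operatorname{Im}r|/2}|\rho(z,w)|^{-\operatorname{Re}r}$. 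The first real step is the absolute-convergence estimate: for real $a>-1$, $b,c>0$ with $b+c-a>n+1$,
\[
\int_{T_B}\frac{\rho(w)^{a}}{|\rho(z,w)|^{b}\,|\rho(w,u)|^{c}}\,dV(w)<\infty ,
\]
which I would obtain by splitting $T_B$ according to which of $|\rho(z,w)|$, $|\rho(w,u)|$ is larger (reducing each piece to a one-kernel integral with exponent $b+c$) and estimating the one-kernel integral through the covering Lemmas \ref{lem duliang}--\ref{lem ceByuanpan} together with Lemma \ref{lem dengjia}. Granting this, $F$ and $G$ are holomorphic on the connected open set $U=\{\operatorname{Re}r>0,\ \operatorname{Re}s>0,\ \operatorname{Re}t>-1,\ \operatorname{Re}(r+s-t)>n+1\}$ (for $F$ by differentiating under the integral sign, for $G$ trivially), so by the identity theorem it is enough to prove $F=G$ on a subset of $U$ with non-empty interior.

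The special case $t=\alpha>-1$, $r=n+\alpha+1$ I would treat by the reproducing property of the weighted kernel $K_{\alpha}$. For $\operatorname{Re}s>(n+1+\alpha)/2$ the holomorphic function $w\mapsto\rho(w,u)^{-s}$ lies in $A^{2}_{\alpha}(T_B)$ by the previous step, so the projection $P_{\alpha}$ fixes it and
\[
\rho(z,u)^{-s}=\int_{T_B}K_{\alpha}(z,w)\,\rho(w,u)^{-s}\,\rho(w)^{\alpha}\,dV(w),
\]
that is, $F(n+\alpha+1,s,\alpha)=\frac{2^{\,n+1}\pi^{n}\Gamma(\alpha+1)}{\Gamma(n+\alpha+1)}\,\rho(z,u)^{-s}=G(n+\alpha+1,s,\alpha)$. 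To leave the hyperplane $r=n+1+\alpha$, note $\partial_{z_{n}}\rho(z,w)=-\tfrac{i}{2}$, hence $\partial_{z_{n}}^{k}\rho(z,w)^{-\sigma}=(\tfrac{i}{2})^{k}\tfrac{\Gamma(\sigma+k)}{\Gamma(\sigma)}\rho(z,w)^{-\sigma-k}$, and the same for $\rho(z,u)$; differentiating the displayed identity $k$ times under the integral sign and comparing gives $F(n+\alpha+1+k,s,\alpha)=G(n+\alpha+1+k,s,\alpha)$ for every $k\in\{0,1,2,\dots\}$.

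It remains to bridge these integer-spaced values of $r$. Both $F$ and $G$ transform in exactly the same way under the dilations $\delta_{\lambda}(z',z_{n})=(\sqrt{\lambda}\,z',\lambda z_{n})\in\operatorname{Aut}(T_B)$ — from $\rho(\delta_{\lambda}z,\delta_{\lambda}w)=\lambda\rho(z,w)$, $\rho\circ\delta_{\lambda}=\lambda\rho$, $dV\circ\delta_{\lambda}=\lambda^{n+1}dV$ one gets $F(\cdot\,;\delta_{\lambda}z,\delta_{\lambda}u)=\lambda^{\,t-r-s+n+1}F(\cdot\,;z,u)$ and likewise for $G$ — so we may replace $(z,u)$ by $(\delta_{\lambda}z,\delta_{\lambda}u)$ with $\lambda$ large and assume $\rho(z)$ and $|\rho(z,u)|$ are as large as we wish; fix such $z,u$. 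For fixed real $t=\alpha>-1$ and $s$ with $\operatorname{Re}s$ large, $\phi(r):=F(r,s,\alpha)-G(r,s,\alpha)$ is holomorphic on a half-plane containing $\{\operatorname{Re}r\ge n+1+\alpha\}$; using $|\rho(z,w)^{-r}|\le e^{\pi|\operatorname{Im}r|/2}|\rho(z,w)|^{-\operatorname{Re}r}$ with $|\rho(z,w)|\ge\rho(z)/2>1$ and Stirling's formula for $\Gamma(r+s-t-n-1)/\Gamma(r)$, one checks that $\phi$ has exponential type $\le\pi/2$ and that $\phi(r)\to0$ as $\operatorname{Re}r\to+\infty$. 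Since $\phi$ vanishes at $r=n+1+\alpha,n+2+\alpha,\dots$, Carlson's theorem forces $\phi\equiv0$; letting $\alpha$, $s$, and then (by holomorphy in $U$) complex $t$ vary, the identity theorem gives $F=G$ throughout $U$. The ``in particular'' assertion follows by taking $u=z$ and $r=s$, so that $\rho(z,w)^{r}\rho(w,z)^{s}=|\rho(z,w)|^{2r}$ and $r+s-t-n-1=2r-t-n-1$; and when $t>-1$ but $s-t\le n+1$ the integral is $+\infty$, as one sees by restricting it to unit Bergman balls about $w_{\lambda}=(z',z_{n}+i\lambda)$, for which $|\rho(z,w_{\lambda})|=\rho(z)+\lambda/2\simeq\rho(w_{\lambda})\simeq\lambda$, so each such ball contributes $\gtrsim\lambda^{\,t-s+n+1}\not\to0$ (Lemmas \ref{lem dengjia}--\ref{lem ceByuanpan}).

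The main obstacle is the penultimate step: making the Carlson argument airtight, in particular the decay $\phi(r)\to0$, which is precisely why the dilation normalisation is needed; a secondary but pervasive point is the justification of Fubini and of differentiation under the integral sign, all of which rests on the $L^{1}$-estimate obtained from Lemmas \ref{lem duliang}--\ref{lem ceByuanpan}. An alternative that avoids Carlson (at the price of heavier bookkeeping) is to pull the integral back to $\mathbb{B}_{n}$ via $\varPhi$ — using $\rho(\varPhi(\zeta),\varPhi(\eta))\simeq (1-\langle\zeta,\eta\rangle)/\bigl((1+\zeta_{n})\overline{(1+\eta_{n})}\bigr)$ and $|J\varPhi|^{2}\simeq|1+\zeta_{n}|^{-2(n+1)}$ — then to remove the point $z$ from the integrand by a single ball automorphism, to represent the surviving boundary factor via the Euler integral $\int_{0}^{1}u^{a-1}(1-u)^{b-1}(1-ux)^{-(a+b)}\,du=\tfrac{\Gamma(a)\Gamma(b)}{\Gamma(a+b)}(1-x)^{-a}$ so that the inner integral becomes a weighted-Bergman reproducing-kernel pairing on $\mathbb{B}_{n}$, and to evaluate the resulting ${}_{2}F_{1}$ at $1$ by Gauss's formula (it collapses because the parameters satisfy $c=a+b$); this route also produces $C_{1}(n,r,s,t)$ explicitly, the remaining range of $r$ being filled by ordinary analytic continuation. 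One may also imitate the Laplace-transform computation of \cite{deng2021reproducing}, which for $n=1$ is the classical contour integral and returns $C_{1}(1,r,s,t)$ on the nose.
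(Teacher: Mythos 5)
The paper itself offers no proof of Lemma \ref{lem jifendengshi}: it is one of the results imported wholesale from \cite{jiaxin2023bergman}, so there is no in-house argument to compare yours against. Judged on its own terms, your plan is viable and the decisive computations check out: $\operatorname{Re}\rho(z,w)\ge\tfrac12(\rho(z)+\rho(w))$ does hold, which legitimises the principal powers and gives the type-$\pi/2$ bound; the base case $r=n+\alpha+1$ is precisely the reproducing identity for $P_\alpha$ applied to $w\mapsto\rho(w,u)^{-s}$ and its constant agrees with $C_1(n,n+\alpha+1,s,\alpha)$; the $\partial_{z_n}^k$ shift (using $\partial_{z_n}\rho(z,w)=-\tfrac{i}{2}$) reproduces $C_1$ at $r=n+\alpha+1+k$; both sides scale with the same exponent $t-r-s+n+1$ under $\delta_\lambda$, so the normalisation making $|\rho(z,w)|\ge\rho(z)/2>1$ (which is what gives the uniform bound on the closed half-plane that Carlson's theorem needs) is harmless; and the specialisation $u=z$ with equal exponents gives the one-kernel identity because $\rho(w,z)=\overline{\rho(z,w)}$.

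Two places need more than you give them. First, the entire scaffolding (Fubini, joint holomorphy in the parameters, differentiation under the integral, membership of $\rho(\cdot,u)^{-s}$ in $A^2_\alpha$, the half-plane bound for Carlson) rests on the qualitative estimate $\int_{T_B}\rho(w)^a|\rho(z,w)|^{-b}|\rho(w,u)|^{-c}\,dV(w)<\infty$. Your splitting by which of $|\rho(z,w)|$, $|\rho(w,u)|$ is larger correctly reduces this to the one-kernel case, but deriving the one-kernel finiteness ``from the covering Lemmas \ref{lem duliang}--\ref{lem ceByuanpan}'' is not yet a proof: the lattice discretisation converts the integral into the sum $\sum_k\rho(a_k)^{a+n+1}|\rho(a_k,u)|^{-b-c}$, and bounding that sum is the same problem over again (comparing it back to the integral is circular). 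The honest route is the one you relegate to an aside: pull back by $\varPhi$ using Lemma \ref{lem yingshe de xingzhi} and invoke the classical Rudin--Forelli estimate on $\mathbb{B}_n$, or integrate directly in $\operatorname{Re}w$ first. Second, the closing clause ``otherwise the integral is infinite'' has two failure modes; your test balls at $w_\lambda=(z',z_n+i\lambda)$ handle $s-t\le n+1$, but the case $t\le-1$ (non-integrability of $\rho(w)^t$ near $\partial T_B$, seen already on a fixed Euclidean box abutting the boundary) is not addressed --- a one-line fix, but missing. With these repairs your argument is complete, though noticeably heavier than a direct evaluation (the $\varPhi$ pull-back reducing to a ball identity, or the Laplace-transform computation as in \cite{deng2021reproducing}), which produces the identity and the constant in one pass; your Carlson route does have the merit of needing nothing beyond the reproducing formula and growth bounds.
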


\begin{lemma}\label{lem yingshe de xingzhi}
	
	The following properties hold for holomorphic mappings from $\mathbb{B}_n$ to $T_B$:	
	
	$(1)$ The real Jacobian of $\varPhi$ at $z\in T_{B}$ is 
	$$
	J_{R}(\varPhi(z))=\frac{2^{n+1}}{{\left | 1+z_n\right |}^{2(n+1)}}.
	$$
	
	$(2)$ The real Jacobian of $\varPhi^{-1}$ at $z\in T_{B}$ is 
	$$
	(J_{R} \varPhi^{-1})(z)=\frac{1}{4\left| \rho (z,\mathbf{i} )\right|^{2(n+1)}}.
	$$
	
	$(3)$  The identity 
	$$
	1-\left \langle \varPhi^{-1}(z),\varPhi^{-1}(w)\right \rangle= \frac{\rho (z,w)}{\rho (z,\mathbf{i} )\rho (\mathbf{i},w)}
	$$
	holds for all $z,w\in T_{B}$, where $\mathbf{i}=(0',i).$ \\ And moreover, 
	$$1- {\left |\varPhi^{-1}(z)\right |}^2= \frac{\rho (z)}{{\left | \rho (z,\mathbf{i})\right |}^2}, 1+[\varPhi^{-1}(z)]_{n} = \frac{1}{\rho (z,\mathbf{i})}.$$
	
	$(4)$ The identity 
	$$
	\rho (z,w)= \rho (\varPhi (\xi ),\varPhi (\eta ))= \frac{1-\left \langle \xi ,\eta \right \rangle}{(1+\xi_{n})(1+\eta_n)}
	$$
	holds for all $z,w\in T_{B},$ where $\xi =\varPhi^{-1}(z), \eta =\varPhi^{-1}(w).$
	
\end{lemma}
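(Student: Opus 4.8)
The plan is to treat all four items as explicit computations anchored to one structural observation: the common denominator appearing in the formula for $\varPhi^{-1}$ is a constant multiple of $\rho(\cdot,\mathbf{i})$. Using the definition $\rho(w,\mathbf{i})=\tfrac14\bigl((w')^2-2i(w_n+i)\bigr)=\tfrac14\bigl((w')^2-2iw_n+2\bigr)$, a one-line manipulation gives $i+w_n+\tfrac i2\,w'\cdot w'=2i\,\rho(w,\mathbf{i})$, so $\varPhi^{-1}(w)$ may be rewritten with $\rho(w,\mathbf{i})$ in every denominator; in particular its $n$-th component becomes $\dfrac{1-\rho(w,\mathbf{i})}{\rho(w,\mathbf{i})}$, whence $1+[\varPhi^{-1}(w)]_n=\dfrac{1}{\rho(w,\mathbf{i})}$ --- one of the ``moreover'' identities of part $(3)$. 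I would record this normalization first, since all the remaining items lean on it.

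For parts $(1)$ and $(2)$ I would use that for a holomorphic map the real Jacobian equals the squared modulus of the complex Jacobian determinant, $J_R\varPhi(z)=|\det\varPhi'(z)|^2$. The matrix $\varPhi'(z)$ is almost block-triangular: its top-left $(n-1)\times(n-1)$ block is the scalar matrix $\tfrac{\sqrt2}{1+z_n}I$, and a Schur-complement expansion shows the cross terms cancel, leaving $\det\varPhi'(z)=(\text{const})(1+z_n)^{-(n+1)}$; taking $|\cdot|^2$ yields $(1)$. Part $(2)$ then follows either by the same computation for $(\varPhi^{-1})'$, or from the relation $J_R\varPhi^{-1}(z)=1/J_R\varPhi\!\left(\varPhi^{-1}(z)\right)$ combined with $1+[\varPhi^{-1}(z)]_n=1/\rho(z,\mathbf{i})$.

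The core of the lemma is the identity in $(3)$. Put $\xi=\varPhi^{-1}(z)$, $\eta=\varPhi^{-1}(w)$ and substitute the normalized formula into $\langle\xi,\eta\rangle=\xi'\cdot\overline{\eta'}+\xi_n\overline{\eta_n}$. Bringing this over the common denominator $\rho(z,\mathbf{i})\,\overline{\rho(w,\mathbf{i})}$ and noting $\overline{\rho(w,\mathbf{i})}=\rho(\mathbf{i},w)$, one expands the numerator of $1-\langle\xi,\eta\rangle$: the terms built from $\rho(z,\mathbf{i})$, $\overline{\rho(w,\mathbf{i})}$ and their product cancel against the expansion of $(1-\rho(z,\mathbf{i}))\overline{(1-\rho(w,\mathbf{i}))}$, and, after rewriting $(z'-\overline{w'})^2=(z')^2-2\,z'\cdot\overline{w'}+(\overline{w'})^2$, what remains reassembles into exactly $\rho(z,w)=\tfrac14\bigl((z'-\overline{w'})^2-2i(z_n-\overline{w_n})\bigr)$. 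Specializing $w=z$ and using $\rho(z,\mathbf{i})\overline{\rho(z,\mathbf{i})}=|\rho(z,\mathbf{i})|^2$ gives the formula for $1-|\varPhi^{-1}(z)|^2$. Part $(4)$ is then a rearrangement of $(3)$: using $1+[\varPhi^{-1}(z)]_n=1/\rho(z,\mathbf{i})$ and its conjugate to replace the factor $\rho(z,\mathbf{i})\rho(\mathbf{i},w)$, dividing the identity of $(3)$ by $(1+\xi_n)(1+\overline{\eta_n})$ isolates $\rho(z,w)$.

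I expect the only genuine obstacle to be bookkeeping in the last step --- keeping the bilinear product ``$\cdot$'' distinct from the Hermitian pairing $\langle\cdot,\cdot\rangle$, matching $\rho(\mathbf{i},w)$ with $\overline{\rho(w,\mathbf{i})}$, and tracking the numerical constants (the $\tfrac14$'s, the $\sqrt2$, the exponent $n+1$) so that both sides agree --- rather than anything conceptual; the Jacobian computations in $(1)$ and $(2)$ are routine once the block structure is exploited.
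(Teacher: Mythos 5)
Your strategy is sound, and for the only item the paper actually proves in-house --- part $(2)$ --- your first option (write out the complex Jacobian matrix of $\varPhi^{-1}$, evaluate its determinant, and take the squared modulus) is exactly the paper's argument: the paper expands the $n\times n$ determinant with entries over $2\rho(w,\mathbf{i})^2$ and gets $i/\bigl(2\rho(z,\mathbf{i})^{n+1}\bigr)$. Parts $(1)$, $(3)$, $(4)$ are not proved in the paper at all (they are cited from the reference), so your Schur-complement computation of $\det\varPhi'$, the normalization $i+w_n+\frac i2 w'\cdot w'=2i\rho(w,\mathbf{i})$, and the expand-and-cancel verification of $(3)$ with the specialization $w=z$ go beyond what the paper records; I checked these steps (the cross terms in the Schur complement do cancel, and the numerator does reassemble into $\rho(z,w)$) and they are correct.

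One caveat you should make explicit, because you present alternatives as interchangeable in two places where they are not. The displayed formula for $\varPhi^{-1}$ is not literally the inverse of the displayed $\varPhi$: composing them sends $z'$ to $\sqrt2\,z'$, i.e.\ the numerator $2iw'$ should read $\sqrt2\,i\,w'$. Consequently your second route to $(2)$, namely $J_R\varPhi^{-1}(z)=1/J_R\varPhi\bigl(\varPhi^{-1}(z)\bigr)$ combined with $1+[\varPhi^{-1}(z)]_n=1/\rho(z,\mathbf{i})$, yields $2^{n+1}$ in the denominator rather than the stated $4$; the constant $4$ arises only if one differentiates the displayed (unnormalized) formula verbatim, which is what the paper does. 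Similarly, your expansion in $(3)$ closes up into $\rho(z,w)$ only with the corrected $\sqrt2$ normalization, since the cross term must be $\tfrac12\,z'\cdot\overline{w'}$ and not $z'\cdot\overline{w'}$; and your rearrangement in $(4)$ produces the denominator $(1+\xi_n)\overline{(1+\eta_n)}$, with a conjugate that is absent from the statement as printed (the $n=1$ Cayley-transform case confirms the conjugate is needed). None of this is a conceptual gap in your argument --- it is a normalization inconsistency in the statement itself --- but since your two routes for $(2)$ give different constants, you cannot offer them as equivalent without first fixing the formula for $\varPhi^{-1}$ and saying which version each computation uses.
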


\begin{proof}
	The properties mentioned above are derived from \cite{jiaxin2023bergman}, and here we only present the unproven property $(2)$.
	
	Simple calculations show that $$\rho (z,\mathbf{i})=\frac{1}{4}(z'^2-2iz_n+2) ,$$
	so 
	$$
	\begin{aligned}
	\left( J_C\varPhi ^{-1} \right) \left( z \right) &=\left| \begin{matrix}
	\frac{2\rho \left( w,\mathbf{i} \right) -w_{1}^{2}}{2\rho \left( w,\mathbf{i} \right) ^2}&		\frac{-w_1w_2}{2\rho \left( w,\mathbf{i} \right) ^2}&		\cdots&		\frac{-w_1w_{n-1}}{2\rho \left( w,\mathbf{i} \right) ^2}&		\frac{iw_1}{2\rho \left( w,\mathbf{i} \right) ^2}\\
	\frac{-w_2w_1}{2\rho \left( w,\mathbf{i} \right) ^2}&		\frac{2\rho \left( w,\mathbf{i} \right) -w_{2}^{2}}{2\rho \left( w,\mathbf{i} \right) ^2}&		\cdots&		\frac{-w_2w_{n-1}}{2\rho \left( w,\mathbf{i} \right) ^2}&		\frac{iw_2}{2\rho \left( w,\mathbf{i} \right) ^2}\\
	\vdots&		\vdots&		\ddots&		\vdots&		\vdots\\
	\frac{-w_{n-1}w_1}{2\rho \left( w,\mathbf{i} \right) ^2}&		\frac{-w_{n-1}w_2}{2\rho \left( w,\mathbf{i} \right) ^2}&		\cdots&		\frac{2\rho \left( w,\mathbf{i} \right) -w_{n-1}^{2}}{2\rho \left( w,\mathbf{i} \right) ^2}&		\frac{iw_{n-1}}{2\rho \left( w,\mathbf{i} \right) ^2}\\
	\frac{-w_1}{2\rho \left( w,\mathbf{i} \right) ^2}&		\frac{-w_2}{2\rho \left( w,\mathbf{i} \right) ^2}&		\cdots&		\frac{-w_{n-1}}{2\rho \left( w,\mathbf{i} \right) ^2}&		\frac{i}{2\rho \left( w,\mathbf{i} \right) ^2}\\
	\end{matrix} \right|\\
	&=\frac{i}{2\rho \left( z,\mathbf{i} \right)^{n+1}}.
	\end{aligned}
	$$
	Therefore $J_R\left( \varPhi \left( z \right) \right) =\left| J_C\left( \varPhi \left( z \right) \right) \right|^2=\frac{1}{ 4\left| \rho \left( z,\mathbf{i} \right) \right|^{2\left( n+1 \right)}}
	.$
\end{proof}
\\

The following lemma plays a key role in estimating the inequality.
\begin{lemma}\label{lem qu bian liang}
	For any $z,w\in {T_{B}}$, we have 
	$$
	2\left| \rho \left( z,w \right) \right|\ge \max \left\{ \rho \left( z \right) ,\rho \left( w \right) \right\}.
	$$
\end{lemma}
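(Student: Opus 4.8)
The plan is to argue directly in real coordinates and bound $|\rho(z,w)|$ below by its real part. Write $z = x + iy$ and $w = u + iv$ with $x,u \in \mathbb{R}^n$ and $y = (y',y_n),\, v = (v',v_n) \in B$; in particular $y'^2 < y_n$ and $v'^2 < v_n$. Since $z' - \overline{w'} = (x'-u') + i(y'+v')$ and $z_n - \overline{w_n} = (x_n - u_n) + i(y_n + v_n)$, substituting into $\rho(z,w) = \frac14((z'-\overline{w'})^2 - 2i(z_n-\overline{w_n}))$ and taking real parts gives
$$
4\,\mathrm{Re}\,\rho(z,w) = (x'-u')^2 - (y'+v')^2 + 2(y_n+v_n).
$$

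I would then use $|\rho(z,w)| \ge \mathrm{Re}\,\rho(z,w)$ and discard the nonnegative term $(x'-u')^2$, leaving $4|\rho(z,w)| \ge 2(y_n+v_n) - (y'+v')^2$. The one inequality that needs to be applied with a little care is the estimate of the cross term: by the triangle inequality in $\mathbb{R}^{n-1}$ together with $2|y'||v'| \le |y'|^2 + |v'|^2$, we get $(y'+v')^2 \le (|y'|+|v'|)^2 \le 2y'^2 + 2v'^2$. Substituting this in yields
$$
4\,|\rho(z,w)| \ge 2(y_n + v_n) - 2y'^2 - 2v'^2 = 2\rho(z) + 2\rho(w),
$$
i.e.\ $2|\rho(z,w)| \ge \rho(z) + \rho(w)$. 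Since $\rho(z) > 0$ and $\rho(w) > 0$ for points of $T_B$, the right-hand side is at least $\max\{\rho(z),\rho(w)\}$, which is the desired conclusion.

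There is no serious obstacle in this argument; it is essentially a one-line computation once the real part of $\rho(z,w)$ is written out. The only places to be attentive are the sign bookkeeping when expanding $(z'-\overline{w'})^2$ and $-2i(z_n-\overline{w_n})$, and choosing the bound $(y'+v')^2 \le 2y'^2 + 2v'^2$ (with the factor $2$) so that $\rho(z) = y_n - y'^2$ and $\rho(w) = v_n - v'^2$ emerge cleanly from the estimate. If one preferred, the inequality could also be deduced by pulling everything back to $\mathbb{B}_n$ through the biholomorphism $\varPhi$ and identity $(4)$ of Lemma~\ref{lem yingshe de xingzhi}, but the direct computation above is shorter and self-contained.
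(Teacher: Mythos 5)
Your computation is correct: expanding $4\,\mathrm{Re}\,\rho(z,w)=(x'-u')^2-(y'+v')^2+2(y_n+v_n)$, discarding $(x'-u')^2$, and using $2|y'||v'|\le y'^2+v'^2$ gives $2|\rho(z,w)|\ge\rho(z)+\rho(w)$, which is even slightly stronger than the stated bound since $\rho(z),\rho(w)>0$ on $T_B$. However, your route is genuinely different from the paper's. The paper does not compute in real coordinates at all: it introduces the holomorphic automorphism $\sigma_z=\delta_{\rho(z)^{-1/2}}\circ h_z$ of $T_B$ sending $z$ to the base point $\mathbf{i}$, invokes the transformation law of the weighted Bergman kernel under biholomorphisms to write $K_\alpha(z,w)=K_\alpha(\mathbf{i},\sigma_z(w))\,\rho(z)^{-n-1}$, and then uses the uniform bound $|K_\alpha(\mathbf{i},w)|\le \Gamma(n+\alpha+1)2^\alpha/(\pi^n\Gamma(\alpha+1))$ (which is exactly the special case $2|\rho(\mathbf{i},w)|\ge 1=\rho(\mathbf{i})$), recovering the general inequality by symmetry in $z$ and $w$. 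The paper's argument has the virtue of reusing machinery that appears elsewhere (the automorphism $\sigma_z$ also drives Lemma~\ref{lem ci tiao he fang da}), but it is only sketched (''considering the arbitrariness of $\alpha$ and $n$'') and yields just the $\max$ bound; your direct estimate is shorter, fully self-contained, avoids the kernel transformation rule entirely, and delivers the sharper additive inequality $2|\rho(z,w)|\ge\rho(z)+\rho(w)$.
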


\begin{proof}
	Let $\delta_{t}(w)=(tw',t^{2}w_{n})$ be the nonisotropic dilation for $t>0$, $w\in T_{B}$. 
	
	For any $w \in T_B$ and each fixed $z \in T_B$, consider the holomorphic mapping $$h_z\left( w \right) :=\left( w'-z',w_n-\text{Re}z_n-iw'\overline{z'}+\frac{i\left| z' \right|^2}{2}+\frac{i\overline{z'}\cdot \overline{z'}}{4}+\frac{iw'\cdot z'}{4} \right).$$ 
	
	It is evident that $h_z(w)$ is a holomorphic automorphism of $T_{B}$. Thus, the mapping $\sigma _{z}:=\delta _{\rho (z)^{-\frac{1}{2}}}\circ h_{z}$ is a holomorphic automorphism of $T_{B}$. Simple calculations reveal that $\sigma _{z}(z)=\mathbf{i}:=(0',i)$ and $$(J_{\mathbb{C}}\sigma_{z})(w)=\rho (z)^{-\frac{n+1}{2}},$$ where $(J_{\mathbb{C}}\sigma_{z})(w)$ denotes the complex Jacobian of $\sigma_{z}$ at $w$. 
	
	Hence, we obtain 
	$$
	\begin{aligned}    
	K_{\alpha}\left( z,w \right)& =\left( J_{\mathbb{C}}\sigma _z \right) \left( z \right) K_{\alpha}\left( \sigma _z\left( z \right) ,\sigma _z\left( w \right) \right) \overline{\left( J_{\mathbb{C}}\sigma _z \right) \left( w \right) }
	\\
	&=K_{\alpha}\left( \mathbf{i},\sigma _z\left( w \right) \right) \,\,\rho \left( z \right) ^{-n-1}.
	\end{aligned}
	$$

	It's worth noting that 
	$$
	\left| K_{\alpha}\left( \mathbf{i},w \right) \right|\le \frac{\varGamma \left( n+\alpha +1 \right) 2^{\alpha}}{\pi ^n\varGamma \left( \alpha +1 \right)}
	$$
	for all $w\in T_{B}$. By substituting the expression for $K_{\alpha}\left( z,w \right)$ into the above inequality, considering the arbitrariness of $\alpha$ and $n$, and rearranging the positions of $z$ and $w$, we arrive at the desired result.
\end{proof}
\\

The following lemma \ref{lem ci tiao he fang da} is commonly encountered in the operator theory on function spaces, illustrating that the growth of functions in Bergman spaces is controlled.
\begin{lemma}\label{lem ci tiao he fang da}
	On the Bergman space $A_{\alpha}^{p}\left( T_B \right)$, every valuation functional is a bounded linear functional. Specifically, for each function $f\in T_B$, we have 
	\[
	\left| f\left( z \right) \right|^p \leq \frac{C}{\rho \left( z \right) ^{n+\alpha+1}}\int_{D\left( z,r \right)}{\left| f\left( w \right) \right|^pdV_\alpha\left( w \right)},
	\]
	where $0<p<\infty$, $\alpha>-1$, $r>0$ and $C$ is a positive constant .
\end{lemma}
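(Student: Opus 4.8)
The plan is to establish the sub-mean-value-type estimate by combining the plurisubharmonicity of $|f|^p$ with the uniform comparability of $\rho(z,w)$ and of the weighted volumes of Bergman balls provided by the earlier lemmas. First I would recall that for a holomorphic function $f$ on $T_B$, the function $|f|^p$ is plurisubharmonic for every $0<p<\infty$, hence subharmonic along complex lines; by a standard sub-mean-value argument over Euclidean balls (or over the Bergman ball $D(z,r)$ itself, using that $D(z,r)$ contains a Euclidean ball of radius comparable to $\rho(z)$, which follows from Lemma \ref{lem ceByuanpan} together with the explicit description of $\beta_{T_B}$), one gets
\[
|f(z)|^p \le \frac{1}{V(D(z,r))}\int_{D(z,r)} |f(w)|^p\, dV(w),
\]
up to an absolute constant absorbed into $C$.

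Next I would convert the unweighted volume average into the weighted one. On $D(z,r)$ the quantity $\rho(w) = y_n - |y'|^2$ is comparable to $\rho(z)$: indeed by Lemma \ref{lem dengjia} we have $|\rho(z,w)| \simeq |\rho(z,z)| = \rho(z)$ for $w \in D(z,r)$, and also $|\rho(w,w)| = \rho(w) \simeq |\rho(z,w)|$ by the same lemma (applying it with the roles adjusted, or directly from the formula for $\beta_{T_B}$, since $\beta_{T_B}(z,w)<r$ forces $\rho(z)\rho(w)/|\rho(z,w)|^2$ to be bounded below); hence $\rho(w)^\alpha \simeq \rho(z)^\alpha$ uniformly on $D(z,r)$. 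Therefore $dV_\alpha(w) = \rho(w)^\alpha\, dV(w) \simeq \rho(z)^\alpha\, dV(w)$ there, which gives
\[
\int_{D(z,r)} |f(w)|^p\, dV(w) \simeq \rho(z)^{-\alpha}\int_{D(z,r)} |f(w)|^p\, dV_\alpha(w).
\]
Combining this with $V(D(z,r)) \simeq V_\alpha(D(z,r))\,\rho(z)^{-\alpha} \simeq \rho(z)^{n+1}$ (again from Lemma \ref{lem ceByuanpan}, since $V_\alpha(D(z,r))\simeq\rho(z)^{n+\alpha+1}$ and the weight is comparable to $\rho(z)^\alpha$ on the ball), the $\rho(z)^{-\alpha}$ factors cancel and we land on
\[
|f(z)|^p \le \frac{C}{\rho(z)^{n+\alpha+1}} \int_{D(z,r)} |f(w)|^p\, dV_\alpha(w),
\]
as claimed.

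The main obstacle is the first step: justifying the sub-mean-value inequality over $D(z,r)$ rather than over a Euclidean ball. The clean way is to transfer to the unit ball via $\varPhi$. Under $\varPhi^{-1}$ the Bergman ball $D(z,r)$ maps to a Bergman ball of $\mathbb{B}_n$ centered at $\varPhi^{-1}(z)$ of the same radius $r$ (by the isometry of Bergman metrics quoted in the Preliminaries), on which the classical sub-mean-value estimate for $A^p_\alpha(\mathbb{B}_n)$ is available; pulling the resulting inequality back with the Jacobian formulas and the identities of Lemma \ref{lem yingshe de xingzhi} (which convert $1-|\varPhi^{-1}(z)|^2$, the weight, and the volume element on $\mathbb{B}_n$ into $\rho(z)$, $\rho(w)$, and $|\rho(z,\mathbf{i})|$ factors) yields exactly the stated inequality — and one checks the $|\rho(z,\mathbf{i})|$ factors all cancel. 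Alternatively, one argues directly: $D(z,r)$ contains a Euclidean ball $B(z,c\rho(z))$ for some $c=c(r)>0$, apply the Euclidean sub-mean-value property of $|f|^p$ over that ball, and then enlarge the domain of integration from $B(z,c\rho(z))$ to $D(z,r)$ since the integrand is nonnegative; the volume of $B(z,c\rho(z))$ is $\simeq \rho(z)^{2n} $, which is not quite $\rho(z)^{n+1}$, so in fact the sharper route really does need the Bergman-ball statement and hence the transfer via $\varPhi$. I would therefore carry out the $\varPhi$-transfer argument as the primary proof.
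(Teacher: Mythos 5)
Your primary argument --- the transfer to $\mathbb{B}_n$ via $\varPhi$ at a general point --- is correct, but it is a genuinely different route from the paper's. The paper proves the estimate only at the single point $\mathbf{i}$: it pulls $f\circ\varPhi$ back to the Euclidean ball $B(0,R)$ with $R=\tanh r$, applies ordinary subharmonicity of $\left| f\circ\varPhi \right|^p$ there, discards the Jacobian factor using $2\left|\rho(w,\mathbf{i})\right|\ge 1$, and then transports the inequality to an arbitrary $z$ by composing with the explicit automorphism $\sigma_z=\delta_{\rho(z)^{-1/2}}\circ h_z$ of $T_B$ constructed in the proof of Lemma \ref{lem qu bian liang}; the constant Jacobian of $\sigma_z$ (together with how $\rho$ and $dV_\alpha$ transform) produces the factor $\rho(z)^{n+\alpha+1}$ directly. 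You instead quote the classical weighted sub-mean-value inequality over Bergman balls of $\mathbb{B}_n$ at the point $\varPhi^{-1}(z)$ and pull it back with the Jacobian and the identities of Lemma \ref{lem yingshe de xingzhi}. That works, with two caveats. First, after the pullback the outside factor is $\left|\rho(z,\mathbf{i})\right|^{2(n+\alpha+1)}$ while the integrand carries $\left|\rho(w,\mathbf{i})\right|^{-2(n+\alpha+1)}$; these are values at different points and do not literally cancel --- you need $\left|\rho(w,\mathbf{i})\right|\simeq\left|\rho(z,\mathbf{i})\right|$ for $w\in D(z,r)$, which is exactly Lemma \ref{lem dengjia} applied with base point $\mathbf{i}$ (constant depending on $r$); this should be said explicitly, though it is the same comparability you already invoke elsewhere, so it is an omission rather than a gap. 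Second, the claim in your opening paragraph that $D(z,r)$ contains a Euclidean ball of radius comparable to $\rho(z)$ is not justified (the Bergman ball is non-isotropic, of Euclidean size about $\rho(z)^{1/2}$ in the $z'$ directions), but since you yourself discard that route as giving only $\rho(z)^{2n}$, this does not affect your final proof. In terms of trade-offs: your route imports the unit-ball Bergman-ball lemma and leans on Lemma \ref{lem dengjia}, avoiding any use of the automorphism group of $T_B$; the paper's route needs only plain subharmonicity at one point plus the explicit maps $\sigma_z$, at the cost of knowing that $\sigma_z$ is a Bergman-metric isometry carrying $D(z,r)$ onto $D(\mathbf{i},r)$ and computing the associated change of variables.
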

	\begin{proof}
		Let $f\in H\left( T_B \right)$. Then $f\circ\varPhi \in H\left(\mathbb{B}_n \right)$. Note that $D\left( \mathbf{i},r \right) =\varPhi \left( B\left( 0,R \right) \right)$ with $R=\tanh \left( r \right)$. By the subharmonicity of $\left| f \right|^p$ and variable transformation, we have 
		\[
		\begin{aligned}
		\left| f\left( \varPhi \left( 0 \right) \right) \right|^p &\leq \frac{1}{V_{\alpha}\left( D\left( 0,R \right) \right)}\int_{B\left( 0,R \right)}{\left| f\left( \varPhi \left( \xi \right) \right) \right|^p}dV_{\alpha}\left( \xi \right)\\
		&= \frac{1}{V_{\alpha}\left( D\left( 0,R \right) \right)}\int_{D\left( \mathbf{i},r \right)}{\left| f\left( w \right) \right|^p}\frac{1}{\left| \rho \left( w,\mathbf{i} \right) \right|^{2\left( n+\alpha +1 \right)}}dV_{\alpha}\left( w \right).\\
		\end{aligned}
		\]
		
		Since $f\left( \varPhi \left( 0 \right) \right) =f\left( \mathbf{i} \right)$ and $2\rho \left( w,\mathbf{i} \right) \geq 1$, there exists a positive constant $C$ such that 
		\[
		\left| f\left( \mathbf{i} \right) \right|^p \leq C\int_{D\left( \mathbf{i},r \right)}{\left| f\left( w \right) \right|^pdV_{\alpha}\left( w \right)}.
		\] 
		Replacing $f$ by $f\circ \sigma _{z}^{-1}$ in the above inequality, we obtain
		\begin{equation*}
		\left| f\left( z \right) \right|^p \leq \frac{C}{\rho \left( z \right) ^{n+\alpha +1}}\int_{D\left( z,r \right)}{\left| f\left( w \right) \right|^pdV_{\alpha}\left( w \right)}.
		\end{equation*}
		This completes the proof of the lemma.
	\end{proof}
\\

As is well known, the reproducing kernel is an element of the Bergman space $A_\alpha^p(T_B)$, hence its norm is finite. Lemma \ref{lem zai shng he de da xiao} below clarifies the size of the reproducing kernel.

\begin{lemma}\label{lem zai shng he de da xiao}
	For $1<p<\infty$ and $\alpha>-1$, for each $z\in T_B$, the Bergman kernel function $K_{\alpha,z}(w)$ belongs to $A_\alpha^p(T_B)$, and its norm is $C\rho \left( z \right) ^{-\left( n+\alpha+1 \right) /p'}$, where $p'$ is the conjugate exponent of $p$ and $C$ is a positive constant depending only on $n,\alpha$ and $p$.
\end{lemma}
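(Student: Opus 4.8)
The plan is to compute $\lVert K_{\alpha,z}\rVert_{p,\alpha}$ directly from the explicit closed form of the Bergman kernel and then read off the value from the integral identity in Lemma~\ref{lem jifendengshi}. First I would write, using that $\lvert K_\alpha(z,w)\rvert = \lvert K_\alpha(w,z)\rvert$ and that $K_\alpha(z,w) = \dfrac{\varGamma(n+\alpha+1)}{2^{n+1}\pi^n\varGamma(\alpha+1)}\,\rho(z,w)^{-(n+\alpha+1)}$,
\[
\lVert K_{\alpha,z}\rVert_{p,\alpha}^p = \int_{T_B}\lvert K_\alpha(z,w)\rvert^p\,dV_\alpha(w) = C\int_{T_B}\frac{\rho(w)^\alpha}{\lvert \rho(z,w)\rvert^{p(n+\alpha+1)}}\,dV(w),
\]
where $C=C(n,\alpha,p)$ is the constant prefactor raised to the $p$-th power.

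Next I would recognize the remaining integral as the ``in particular'' case of Lemma~\ref{lem jifendengshi} with parameters $t=\alpha$ and $s=p(n+\alpha+1)$. Its two hypotheses are $t>-1$ and $s-t>n+1$. The first is exactly $\alpha>-1$, which is assumed. The second reads $p(n+\alpha+1)-\alpha>n+1$, i.e. $(p-1)(n+\alpha+1)>0$; since $\alpha>-1$ gives $n+\alpha+1>0$ and $p>1$ gives $p-1>0$, this holds. This is the one place where the hypothesis $p>1$ is genuinely used: for $p=1$ the integral diverges and $K_{\alpha,z}\notin A_\alpha^{1}(T_B)$.

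Then I would apply Lemma~\ref{lem jifendengshi} to get
\[
\int_{T_B}\frac{\rho(w)^\alpha}{\lvert \rho(z,w)\rvert^{p(n+\alpha+1)}}\,dV(w) = \frac{C_1(n,s,t)}{\rho(z)^{\,s-t-n-1}},
\]
and simplify the exponent: $s-t-n-1 = p(n+\alpha+1)-\alpha-n-1 = (p-1)(n+\alpha+1) = \frac{p}{p'}(n+\alpha+1)$, using $p-1=p/p'$. Hence $\lVert K_{\alpha,z}\rVert_{p,\alpha}^p = C\,\rho(z)^{-\frac{p}{p'}(n+\alpha+1)}$, and taking $p$-th roots gives $\lVert K_{\alpha,z}\rVert_{p,\alpha} = C\,\rho(z)^{-(n+\alpha+1)/p'}$ with $C=C(n,\alpha,p)$. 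Finiteness of this quantity, together with the holomorphy of $w\mapsto K_\alpha(w,z)$ on $T_B$, yields $K_{\alpha,z}\in A_\alpha^{p}(T_B)$.

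As for the main obstacle: there is no deep difficulty here, since the statement is essentially immediate once Lemma~\ref{lem jifendengshi} is in hand. The only points needing care are verifying the integrability condition $s-t>n+1$ (which is where $p>1$ enters) and carrying out the exponent bookkeeping $s-t-n-1=(p-1)(n+\alpha+1)=(p/p')(n+\alpha+1)$; one should also note the harmless identification of $K_{\alpha,z}(w)$ with the holomorphic function $w\mapsto K_\alpha(w,z)$, whose modulus equals $\lvert K_\alpha(z,w)\rvert$ so that the norm computation is unaffected.
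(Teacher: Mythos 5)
Your proof is correct and follows exactly the route the paper intends: the paper omits the argument, remarking only that the lemma follows from a straightforward calculation together with Lemma~\ref{lem jifendengshi}, and your computation (taking $t=\alpha$, $s=p(n+\alpha+1)$, checking $s-t>n+1$ via $p>1$, and simplifying $s-t-n-1=(p/p')(n+\alpha+1)$) is precisely that calculation carried out in full.
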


\begin{proof}
	The above conclusion can be obtained through straightforward calculation and the lemma \ref{lem jifendengshi}, so we omit the proof.
\end{proof}
\\

The following lemma \ref{lem ruo shou lian de deng jia tiao jian} provides an equivalent condition for weak convergence of functions in Bergman spaces. Although it is well-known, its proof has not been found, thus, we provide one below.
\begin{lemma}\label{lem ruo shou lian de deng jia tiao jian}
	Suppose $\left\{ f_j \right\}$ is a sequence in $A_{\alpha}^{p}\left( T_B \right)$ with $1<p<\infty.$ Then $f_j\rightarrow 0$ weakly in $A_{\alpha}^{p}\left( T_B \right)$ as $j\rightarrow \infty$ if and only if $\left\{ f_j \right\}$ is bounded in $A_{\alpha}^{p}\left( T_B \right)$ and converges to $0$ uniformly on each compact subset of $T_B$.
\end{lemma}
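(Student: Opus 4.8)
The plan is to prove the two implications separately, using Lemma \ref{lem ci tiao he fang da} as the engine for the forward direction and a Hahn--Banach representation together with an exhaustion argument for the reverse one.

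\textbf{($\Rightarrow$)} Suppose first that $f_j\to 0$ weakly. A weakly convergent sequence is weakly bounded, so by the uniform boundedness principle (recall $A_\alpha^p(T_B)$ is a Banach space for $1<p<\infty$) we get $M:=\sup_j\lVert f_j\rVert_{p,\alpha}<\infty$, which is the boundedness assertion. For the convergence on compacta, I would note that by Lemma \ref{lem ci tiao he fang da} each point evaluation $\delta_z:f\mapsto f(z)$ is a bounded linear functional on $A_\alpha^p(T_B)$, hence $f_j(z)=\delta_z(f_j)\to 0$ for every $z\in T_B$. To upgrade this pointwise convergence to local uniform convergence I would invoke Lemma \ref{lem ci tiao he fang da} a second time: on any compact $E\subset T_B$ one has $\inf_{z\in E}\rho(z)>0$, so $|f_j(z)|^p\le C_E\lVert f_j\rVert_{p,\alpha}^p\le C_E M^p$ on $E$; thus $\{f_j\}$ is locally uniformly bounded and therefore a normal family by Montel's theorem. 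If $f_j$ failed to converge to $0$ uniformly on some compact $E$, a subsequence would be bounded below in sup-norm on $E$, yet a further subsequence would converge locally uniformly to a holomorphic function, which by the pointwise convergence above must be identically $0$ --- a contradiction.

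\textbf{($\Leftarrow$)} Now suppose $\lVert f_j\rVert_{p,\alpha}\le M$ and $f_j\to 0$ uniformly on each compact subset of $T_B$. Let $\Lambda\in (A_\alpha^p(T_B))^*$ be arbitrary; by Hahn--Banach and the standard description of $(L_\alpha^p(T_B))^*$ there is $g\in L_\alpha^{p'}(T_B)$ with $\Lambda(f)=\int_{T_B}f\,\overline{g}\,dV_\alpha$ for all $f\in A_\alpha^p(T_B)$. Fix the exhaustion $T_B=\bigcup_k E_k$ with $E_k=\{z\in T_B:\rho(z)\ge 1/k,\ |z|\le k\}$, each $E_k$ compact. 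Given $\varepsilon>0$, since $|g|^{p'}\in L^1(dV_\alpha)$, dominated convergence lets me pick $k$ with $M\,\lVert g\chi_{T_B\setminus E_k}\rVert_{p',\alpha}<\varepsilon/2$, so Hölder gives $\big|\int_{T_B\setminus E_k}f_j\,\overline{g}\,dV_\alpha\big|\le\lVert f_j\rVert_{p,\alpha}\lVert g\chi_{T_B\setminus E_k}\rVert_{p',\alpha}<\varepsilon/2$ for all $j$. On the compact set $E_k$ the measure $dV_\alpha$ is finite, so $g\in L^1(E_k,dV_\alpha)$, and uniform convergence gives $\big|\int_{E_k}f_j\,\overline{g}\,dV_\alpha\big|\le(\sup_{E_k}|f_j|)\int_{E_k}|g|\,dV_\alpha<\varepsilon/2$ for $j$ large. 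Hence $|\Lambda(f_j)|<\varepsilon$ eventually, and since $\Lambda$ was arbitrary, $f_j\to 0$ weakly.

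The genuinely delicate step is the normal-families argument in the forward direction: passing from pointwise to locally uniform convergence needs both the local uniform bound furnished by Lemma \ref{lem ci tiao he fang da} and a subsequence-of-a-subsequence argument to exclude a nonzero normal limit. Everything else --- the two Hölder estimates, the dominated-convergence tail bound, and the Hahn--Banach representation --- is routine.
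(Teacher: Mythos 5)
Your proof is correct, and the forward direction is essentially the paper's: uniform boundedness principle for the norm bound, boundedness of point evaluations (Lemma \ref{lem ci tiao he fang da}) for pointwise convergence, and a normal-family/subsequence argument to upgrade to local uniform convergence. The reverse direction, however, takes a genuinely different (and in one respect more careful) route. The paper tests $f_j$ only against functions $g\in A_{\alpha}^{q}(T_B)$ with $\frac{1}{p}+\frac{1}{q}=1$, splitting the pairing $\int_{T_B} f_j\overline{g}\,\rho^{\alpha}dV$ into a compact piece and a tail controlled by H\"older; this implicitly relies on identifying $(A_{\alpha}^{p}(T_B))^{*}$ with $A_{\alpha}^{q}(T_B)$, i.e.\ on the boundedness of the Bergman projection on $L_{\alpha}^{p}(T_B)$, which is not invoked in the paper's proof of this lemma. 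You instead take an arbitrary functional $\Lambda\in (A_{\alpha}^{p}(T_B))^{*}$, extend it by Hahn--Banach to $L_{\alpha}^{p}(T_B)$, and represent it by some $g\in L_{\alpha}^{p'}(T_B)$; the same compact-plus-tail splitting (via your exhaustion $E_k$ and dominated convergence) then closes the argument without any appeal to Bergman-space duality. What your version buys is self-containedness and a cleanly justified claim that \emph{every} bounded functional is handled; what the paper's version buys is brevity, at the cost of leaving the duality identification implicit. Both are valid, and the analytic core --- H\"older on the tail, uniform convergence on the compact part --- is the same.
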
 

\begin{proof}
	(1) We first prove the necessary part of the lemma. 
	
	Suppose $\left\{ f_j \right\}$ converges to $0$ weakly in $A_{\alpha}^{p}\left( T_B \right)$ as $j\rightarrow \infty$, then $\left\{ f_j \right\}$ is bounded in $A_{\alpha}^{p}\left( T_B \right)$ according to the uniform boundedness principle.
	
	 Based on the above lemma \ref{lem ci tiao he fang da}, $\left\{ f_j \right\}$ is uniformly bounded on every compact subset of $A_{\alpha}^{p}\left( T_B \right)$ and thus is a normal family.
	 
	  Note that $\left\{ f_j \right\}$ converges to $0$ pointwise according to the property of the reproducing kernel as $j\rightarrow \infty$, so $\left\{ f_j \right\}$ converges uniformly to $0$ on every compact subset of $T_B$ as $j\rightarrow \infty$.
	
	(2) Now we prove the sufficiency part of the lemma.
	
	 Suppose $\sup_j\lVert f_j \rVert _{p,\alpha}<\infty $ and $\left\{ f_j \right\}$ converges uniformly to $0$ on every compact subset of $T_B$ as $j\rightarrow \infty$. 
	For any $\varepsilon >0$ and any $g\in A_{\alpha}^{q}\left( T_B \right)$ (where $\frac{1}{p}+\frac{1}{q}=1$), there exists a compact subset $K$ of $T_B$ such that 
	\[
	\left( \int_{T_B\backslash K}{\left| g\left( z \right) \right|^q\rho \left( z \right) ^{\alpha}dV\left( z \right)} \right) ^{\frac{1}{q}}<\frac{\varepsilon}{2M}
	.\] Then, through the Holder inequality, we have
	\begin{equation*}
	\begin{aligned}
	\left| \left( f_j,g \right) \right| &\leq \left| \int_K{f_j\left( z \right) \overline{g\left( z \right) }\rho \left( z \right) ^{\alpha}dV\left( z \right)} \right|\\
	&+\left| \int_{T_B\backslash K}{f_j\left( z \right) \overline{g\left( z \right) }\rho \left( z \right) ^{\alpha}dV\left( z \right)} \right|\\
	&\leq V\left( k \right) ^{\frac{1}{p}}\lVert g\left( z \right) \rVert _{q,\alpha}\mathop{sup}_{z\in K}\left| f_j\left( z \right) \right|\\
	&+\lVert f_j\left( z \right) \rVert _{p,\alpha}\left( \int_{T_B\backslash K}{\left| g\left( z \right) \right|^q\rho \left( z \right) ^{\alpha}dV\left( z \right)} \right) ^{\frac{1}{q}}.
	\end{aligned}
	\end{equation*}
	Since $\left\{ f_j \right\}$ uniformly converges to $0$ on $K$ as $j\rightarrow \infty$, there exists $N>0$ such that when $j\ge N$, the second term of the above formula is less than $\varepsilon/2$.
	
	Therefore, $\left( f_j,g \right)\rightarrow 0$ holds when $j\rightarrow \infty $, and then $\left\{ f_j \right\}$ converges weakly to $0$ in $A_{\alpha}^{p}\left( T_B \right)$ as $j\rightarrow \infty$. The proof is complete.
\end{proof}

\begin{lemma}\label{lem zitui}
	As $\left| z \right|$ approaches infinity, $\rho \left( z,\mathbf{i} \right)$ also tends to infinity.
\end{lemma}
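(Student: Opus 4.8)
The plan is to evaluate $\rho(z,\mathbf{i})$ in real coordinates and bound its modulus below by a quantity that is forced to diverge when $|z|\to\infty$. Recall from the proof of Lemma~\ref{lem yingshe de xingzhi} that $\rho(z,\mathbf{i})=\frac{1}{4}(z'^{2}-2iz_{n}+2)$. Writing $z=x+iy$ with $x,y\in\mathbb{R}^{n}$ and $y\in B$, so that $|y'|^{2}<y_{n}$, and expanding $z'^{2}=\sum_{j=1}^{n-1}(x_{j}+iy_{j})^{2}$ together with $-2iz_{n}=2y_{n}-2ix_{n}$, I would separate real and imaginary parts to obtain
\[
4\rho(z,\mathbf{i})=A+iB,\qquad A=|x'|^{2}-|y'|^{2}+2y_{n}+2,\qquad B=2\,x'\!\cdot y'-2x_{n},
\]
where $x'\!\cdot y'=\sum_{j=1}^{n-1}x_{j}y_{j}$.

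The first key point is that the tube condition $y_{n}>|y'|^{2}$ turns $A$ into (a multiple of) a genuine norm. Indeed $2y_{n}-|y'|^{2}=y_{n}+(y_{n}-|y'|^{2})>y_{n}$ gives $A>|x'|^{2}+y_{n}$, while $2y_{n}>2|y'|^{2}$ gives $A>|y'|^{2}$; adding these two inequalities, $2A>|x'|^{2}+|y'|^{2}+y_{n}$, and in particular $A>0$ always.

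For the main argument I would argue by contradiction: suppose there exist $M>0$ and points $z\in T_{B}$ with $|z|$ arbitrarily large but $|\rho(z,\mathbf{i})|\le M$. For such a $z$ the estimate $A=\mathrm{Re}(4\rho(z,\mathbf{i}))\le 4M$, combined with the previous step, forces $|x'|^{2}+|y'|^{2}+y_{n}\le 8M$; hence $|x'|$, $|y'|$, $y_{n}$ are all bounded by a constant depending only on $M$, and, since $|z|^{2}=|x'|^{2}+|y'|^{2}+x_{n}^{2}+y_{n}^{2}$, taking $|z|$ large enough makes $|x_{n}|$ as large as we please. Then, once $|x_{n}|\ge 8M\ge|x'|^{2}+|y'|^{2}\ge 2|x'\!\cdot y'|$ (Cauchy--Schwarz and $2ab\le a^{2}+b^{2}$), we get $|B|\ge 2|x_{n}|-2|x'\!\cdot y'|\ge|x_{n}|$, which we may arrange to exceed $4M$, contradicting $|B|\le|4\rho(z,\mathbf{i})|\le 4M$.

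The step I expect to be the real obstacle is precisely the regime where $|z|\to\infty$ driven only by the component $x_{n}$: there $A$ stays bounded, so the growth has to come from the imaginary part $B=2x'\!\cdot y'-2x_{n}$, and one must rule out a cancellation $2x'\!\cdot y'\approx 2x_{n}$. The resolution is exactly the inequality $2A>|x'|^{2}+|y'|^{2}+y_{n}$ from the second step: a bounded real part already pins down $|x'|$ and $|y'|$, so $x'\!\cdot y'$ cannot keep pace with a divergent $x_{n}$. Alternatively, Lemma~\ref{lem qu bian liang} gives $2|\rho(z,\mathbf{i})|\ge\rho(z)=y_{n}-|y'|^{2}$, which settles the cases where $\rho(z)$ is unbounded, but the self-contained computation above handles all cases uniformly.
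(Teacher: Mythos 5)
Your proof is correct, and it takes a genuinely different (and in fact tighter) route than the paper's. The paper argues in two cases: it first notes the bound $4\left|\rho(z,\mathbf{i})\right|\ge 4\,\mathrm{Re}\,\rho(z,\mathbf{i})\ge \sum_{k=1}^{n-1}|z_k|^2+2$, which handles $|z'|\to\infty$, and then treats $|z_n|\to\infty$ only after fixing the first $n-1$ variables to be zero, so the delicate regime you single out --- $z'$ bounded but nonzero while $x_n$ alone drives $|z|\to\infty$, with possible cancellation between $2x'\cdot y'$ and $2x_n$ in the imaginary part --- is not literally covered by the paper's case split (it can be patched by a triangle inequality, $4|\rho(z,\mathbf{i})|\ge 2|z_n|-|z'^2|-2$ when $|z'|$ stays bounded, but the paper does not say this). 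Your argument instead works uniformly: the explicit decomposition $4\rho(z,\mathbf{i})=A+iB$ with $A=|x'|^2-|y'|^2+2y_n+2$, $B=2x'\cdot y'-2x_n$, together with the tube condition $y_n>|y'|^2$, shows a bounded real part already confines $|x'|,|y'|,y_n$, after which the imaginary part must blow up with $|x_n|$; this makes the cancellation impossible and disposes of all sequences $|z|\to\infty$ at once, without subsequence or case analysis. What the paper's version buys is brevity and reuse of the crude estimate $\mathrm{Re}\,\rho(z,\mathbf{i})>0$ already implicit in Lemma \ref{lem qu bian liang}; what yours buys is a self-contained, airtight treatment of exactly the step where the paper's proof is thinnest. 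One cosmetic remark: your claim $A>|y'|^2$ is weaker than what your own computation gives ($A>|x'|^2+|y'|^2+2$), so you could state $2A>|x'|^2+|y'|^2+y_n$ directly in one line, but this does not affect correctness.
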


\begin{proof}
	We consider two cases.
	
	Case 1: When $z_n$ is fixed and $\left| z_k \right| \rightarrow \infty$ for $k = 1, 2, \ldots, n-1$, we have
	\[
	\begin{aligned}
	4\left| \rho \left( z,\mathbf{i} \right) \right| &\ge 4\text{Re}\rho \left( z,\mathbf{i} \right) \\
	&=\sum_{k=1}^{n-1}{\text{Re}z_{k}^{2}}+2\text{Im}z_n+2 \\
	&\ge \sum_{k=1}^{n-1}{\text{Re}z_{k}^{2}}+2\sum_{k=1}^{n-1}{\left( \text{Im}z_k \right) ^2}+2 \\
	&=\sum_{k=1}^{n-1}{\left( \text{Re}z_k \right) ^2}+\sum_{k=1}^{n-1}{\left( \text{Im}z_k \right) ^2}+2 \\
	&=\sum_{k=1}^{n-1}{\left| z_k \right|^2}+2.
	\end{aligned}
	\]
This implies $\left| \rho \left( z,\mathbf{i} \right) \right| \rightarrow \infty$ as $\left| z_k \right| \rightarrow \infty$ for $k = 1, 2, \ldots, n-1$.

	Case 2: When the previous $n-1$ variables are fixed (assuming they are all zero without loss of generality), then
	\[
	2\left| \rho \left( z,\mathbf{i} \right) \right|\ge\left| z_n \right|-1,
	\]	
	which shows that $\left| \rho \left( z,\mathbf{i} \right) \right| \rightarrow \infty$ as $\left| z_n \right| \rightarrow \infty$.
	
	Therefore, from cases 1 and 2, we can obtain that $\left| \rho \left( z,\mathbf{i} \right) \right| \rightarrow \infty$ as $\left| z \right| \rightarrow \infty$, which completes the proof.
\end{proof}\\

In the Bergman spaces on the unit disk, it is known that as $\left| z \right|$ approaches the boundary, the normalized reproducing kernel weakly converges to 0 in $A_{\alpha}^{p}\left( T_B \right)$, as referenced in \cite{zhu2007operator}. The following lemma \ref{lem ruo shou lian}  informs us that in the Bergman spaces on  tubes, the normalized reproducing kernel also possesses this property.

\begin{lemma}\label{lem ruo shou lian}
	For $1<p<\infty$ and $\alpha>-1$, we have $K_{\alpha,z}\lVert K_{\alpha,z} \rVert _{p,\alpha}^{-1}\rightarrow 0$ weakly in $A_{\alpha}^{p}\left( T_B \right)$ as $z\rightarrow \partial \widehat{T_B}.$ 
\end{lemma}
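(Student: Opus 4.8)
The plan is to use the equivalent characterization of weak convergence supplied by Lemma \ref{lem ruo shou lian de deng jia tiao jian}: it suffices to show that the normalized kernels $k_{\alpha,z} := K_{\alpha,z}\lVert K_{\alpha,z}\rVert_{p,\alpha}^{-1}$ stay bounded in $A_\alpha^p(T_B)$ and converge to $0$ uniformly on every compact subset of $T_B$ as $z \to \partial\widehat{T_B}$. Boundedness is immediate — in fact the norm is exactly $1$ — so the whole content is the uniform-on-compacta decay. By Lemma \ref{lem zai shng he de da xiao}, $\lVert K_{\alpha,z}\rVert_{p,\alpha} = C\rho(z)^{-(n+\alpha+1)/p'}$, so explicitly
\[
k_{\alpha,z}(w) = C^{-1}\rho(z)^{(n+\alpha+1)/p'}\,\frac{\varGamma(n+\alpha+1)}{2^{n+1}\pi^n\varGamma(\alpha+1)}\,\frac{1}{\rho(z,w)^{n+\alpha+1}},
\]
and hence, absorbing constants,
\[
|k_{\alpha,z}(w)| \simeq \frac{\rho(z)^{(n+\alpha+1)/p'}}{|\rho(z,w)|^{n+\alpha+1}}.
\]

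Now fix a compact set $L \subseteq T_B$. The claim is that the right-hand side tends to $0$ uniformly for $w \in L$ as $z \to \partial\widehat{T_B}$, and here I would split according to the two ways $z$ can approach $\partial\widehat{T_B}$, namely $\rho(z) \to 0$ or $|z| \to \infty$. In the first case, $\rho(z)^{(n+\alpha+1)/p'} \to 0$ while, by Lemma \ref{lem qu bian liang}, $2|\rho(z,w)| \ge \rho(w) \ge \inf_{w\in L}\rho(w) =: c_L > 0$, so the denominator is bounded below uniformly on $L$; thus $|k_{\alpha,z}(w)| \lesssim \rho(z)^{(n+\alpha+1)/p'}/c_L^{n+\alpha+1} \to 0$ uniformly. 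In the second case, $|z| \to \infty$: I would use Lemma \ref{lem zitui} (which gives $|\rho(z,\mathbf{i})| \to \infty$) together with the fact that $|\rho(z,w)|$ is comparable to $|\rho(z,\mathbf{i})|$ uniformly for $w$ in the compact set $L$. The comparability follows from Lemma \ref{lem dengjia} once we note $\beta(w,\mathbf{i})$ is bounded for $w \in L$ (a compact set has finite Bergman diameter and $\mathbf{i}$ is a fixed point, so $r_L := \sup_{w\in L}\beta(w,\mathbf{i}) < \infty$), giving $|\rho(z,w)| \simeq |\rho(z,\mathbf{i})|$ with constants depending only on $r_L$. Meanwhile $\rho(z)$ stays bounded on the region under consideration — or, if one worries about $\rho(z)$ also growing, one simply notes $\rho(z) \le y_n \le |z|$ is dominated by a power of $|z|$ while $|\rho(z,\mathbf{i})|^{n+\alpha+1}$ grows faster; more cleanly, $\rho(z)/|\rho(z,\mathbf{i})|^2 = 1 - |\varPhi^{-1}(z)|^2 \le 1$ by Lemma \ref{lem yingshe de xingzhi}(3), so $\rho(z)^{(n+\alpha+1)/p'} \le |\rho(z,\mathbf{i})|^{2(n+\alpha+1)/p'}$ and therefore
\[
|k_{\alpha,z}(w)| \lesssim \frac{|\rho(z,\mathbf{i})|^{2(n+\alpha+1)/p'}}{|\rho(z,\mathbf{i})|^{n+\alpha+1}} = |\rho(z,\mathbf{i})|^{(n+\alpha+1)(2/p' - 1)},
\]
and since $p > 1$ forces $p' < \infty$, hence $2/p' - 1 < 1$; but this exponent need not be negative. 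So I would instead carry the $\rho(z)$ factor honestly: on the locus $|z| \to \infty$ one still has, from Lemma \ref{lem qu bian liang}, $2|\rho(z,w)| \ge \rho(w) \ge c_L$, and combining the two lower bounds for $|\rho(z,w)|$ (the constant $c_L$ and the comparable-to-$|\rho(z,\mathbf{i})|$ one) with the universal bound $\rho(z) \le |\rho(z,\mathbf{i})|^2 \cdot 1$ cancels out appropriately — the cleanest route is: write $|k_{\alpha,z}(w)|^{p'/(n+\alpha+1)} \simeq \rho(z)/|\rho(z,w)|^{p'} \le \rho(z)/(|\rho(z,w)|^{p'-1} \cdot c_L) \lesssim |\rho(z,\mathbf{i})|^2 /(|\rho(z,\mathbf{i})|^{p'-1}c_L)$ using comparability, and this $\to 0$ provided $p' - 1 > 2$, i.e. $p' > 3$ — not good enough either.

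The genuine obstacle, then, is the case $|z| \to \infty$ with $\rho(z)$ possibly \emph{also} large (e.g. $z = iy$ with $y = (0', y_n)$, $y_n \to \infty$, where $\rho(z) = y_n \to \infty$), and the two crude lower bounds on $|\rho(z,w)|$ do not by themselves beat the numerator. The fix is to not throw away information: for $w \in L$ fixed and $|z|$ large, use comparability $|\rho(z,w)| \simeq |\rho(z,\mathbf{i})|$ \emph{and} the identity $1 - |\varPhi^{-1}(z)|^2 = \rho(z)/|\rho(z,\mathbf{i})|^2$ to write
\[
|k_{\alpha,z}(w)| \simeq \frac{\rho(z)^{(n+\alpha+1)/p'}}{|\rho(z,\mathbf{i})|^{n+\alpha+1}} = \bigl(1 - |\varPhi^{-1}(z)|^2\bigr)^{(n+\alpha+1)/p'}\,|\rho(z,\mathbf{i})|^{-(n+\alpha+1)/p},
\]
where I have split $n+\alpha+1 = (n+\alpha+1)/p' + (n+\alpha+1)/p$ and used $\rho(z) = (1-|\varPhi^{-1}(z)|^2)|\rho(z,\mathbf{i})|^2$. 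Since $1 - |\varPhi^{-1}(z)|^2 \le 1$, the first factor is $\le 1$, and by Lemma \ref{lem zitui} the second factor $|\rho(z,\mathbf{i})|^{-(n+\alpha+1)/p} \to 0$ as $|z| \to \infty$. This is uniform in $w \in L$ because the implied constant in $|\rho(z,w)| \simeq |\rho(z,\mathbf{i})|$ depends only on $r_L$. Combining with the first case $\rho(z) \to 0$ completes the proof: $k_{\alpha,z} \to 0$ uniformly on compacta and is norm-bounded, hence $k_{\alpha,z} \to 0$ weakly in $A_\alpha^p(T_B)$ by Lemma \ref{lem ruo shou lian de deng jia tiao jian}. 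I expect the bookkeeping of this last splitting — choosing to distribute the exponent $n+\alpha+1$ between $p$ and $p'$ rather than bounding brutally — to be the one subtle point; everything else is an application of the cited lemmas.
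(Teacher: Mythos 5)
Your overall route is the same as the paper's: reduce weak convergence to norm--boundedness plus uniform decay on compacta via Lemma \ref{lem ruo shou lian de deng jia tiao jian}, estimate $\left|K_{\alpha,z}(w)\right|\lVert K_{\alpha,z}\rVert_{p,\alpha}^{-1}$ by $\rho(z)^{(n+\alpha+1)/p'}/\left|\rho(z,\mathbf{i})\right|^{n+\alpha+1}$ using Lemmas \ref{lem zai shng he de da xiao} and \ref{lem dengjia}, and split according to the two boundary behaviours $\rho(z)\to 0$ and $|z|\to\infty$. Your first case (lower-bounding $|\rho(z,w)|$ by $\tfrac12\inf_{w\in L}\rho(w)$ via Lemma \ref{lem qu bian liang}, instead of the paper's $2|\rho(z,\mathbf{i})|\ge 1$) is fine, as is the norm-boundedness remark.

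However, the decisive display in your $|z|\to\infty$ case is algebraically wrong. Writing $N=n+\alpha+1$ and substituting $\rho(z)=\bigl(1-|\varPhi^{-1}(z)|^2\bigr)|\rho(z,\mathbf{i})|^2$ into $\rho(z)^{N/p'}/|\rho(z,\mathbf{i})|^{N}$ gives
\[
\bigl(1-|\varPhi^{-1}(z)|^2\bigr)^{N/p'}\,|\rho(z,\mathbf{i})|^{\,2N/p'-N}
=\bigl(1-|\varPhi^{-1}(z)|^2\bigr)^{N/p'}\,|\rho(z,\mathbf{i})|^{\,N(1-2/p)},
\]
not $\bigl(1-|\varPhi^{-1}(z)|^2\bigr)^{N/p'}|\rho(z,\mathbf{i})|^{-N/p}$; the exponents match only when $p=1$, which is excluded. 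Bounding the factor $1-|\varPhi^{-1}(z)|^2$ by $1$ then returns you exactly to the estimate you had already discarded earlier in your own text: the exponent $N(1-2/p)$ is negative only for $p<2$, so as written the case $p\ge 2$ is not covered. The repair is one line and is precisely the paper's step: by Lemma \ref{lem qu bian liang}, $\rho(z)\le 2|\rho(z,\mathbf{i})|$, so with your intended split $N=N/p'+N/p$,
\[
\frac{\rho(z)^{N/p'}}{|\rho(z,\mathbf{i})|^{N}}
=\Bigl(\frac{\rho(z)}{|\rho(z,\mathbf{i})|}\Bigr)^{N/p'}\frac{1}{|\rho(z,\mathbf{i})|^{N/p}}
\le \frac{2^{N/p'}}{|\rho(z,\mathbf{i})|^{N/p}}\longrightarrow 0
\]
as $|z|\to\infty$ by Lemma \ref{lem zitui}, uniformly on the compact set since the comparability constant in $|\rho(z,w)|\simeq|\rho(z,\mathbf{i})|$ depends only on its Bergman diameter. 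With this substitution your argument coincides with the paper's proof.
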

	\begin{proof}
		According to Lemma \ref{lem ruo shou lian de deng jia tiao jian}, our proof is to demonstrate the uniform convergence to $0$ of $K_{\alpha,z}\lVert K_{\alpha,z} \rVert _{{p,\alpha}}^{-1}$ on each $Q_j:= \overline{D\left( \mathbf{i},j \right) }$.

		By applying Lemma \ref{lem zai shng he de da xiao} and Lemma \ref{lem dengjia}, we can establish the existence of a constant $C>0$ such that 
		\[
		\sup_{w\in Q_j}\left| \frac{K_{\alpha,z}\left( w \right)}{\lVert K_{\alpha,z} \rVert _{p,\alpha}} \right|\le C\frac{\rho \left( z \right) ^{(n+\alpha+1)/p'}}{\left| \rho \left( z,\mathbf{i} \right) \right|^{n+\alpha+1}}
		\] 
		for all $z\in T_B$. Given that $2\rho \left( z,\mathbf{i} \right) =\frac{1}{2}\left| z'^2-2iz_n+2 \right|\ge1$ for all $z\in T_B$, we deduce 
		\[
		\sup_{w\in Q_j}\left| \frac{K_{\alpha,z}\left( w \right)}{\lVert K_{\alpha,z}\rVert _{p,\alpha}} \right|\le C\rho \left( z \right) ^{(n+\alpha+1)/p'},
		\]
		which indicates that $K_{\alpha,z}\lVert K_{\alpha,z}\rVert _{{p,\alpha}}^{-1}\rightarrow 0$ uniformly on $Q_j$ as $z\rightarrow \partial T_B$. 
		
		Furthermore, through the utilization of Lemma \ref{lem qu bian liang} and Lemma \ref{lem zitui}, we ascertain that 
		\[
		\sup_{w\in Q_j}\left| \frac{K_{\alpha,z}\left( w \right)}{\lVert K_{\alpha,z}\rVert _{p,\alpha}} \right|\le\frac{C}{\left| \rho \left( z,\mathbf{i} \right) \right|^{\left( n+\alpha+1 \right) /p}}\rightarrow 0,
		\]
		which demonstrates that $K_{\alpha,z}\lVert K_{\alpha,z}\rVert _{{p,\alpha}}^{-1}\rightarrow 0$ uniformly on $Q_j$ as $\left| z \right|\rightarrow \infty $. Thus, the lemma is successfully proven.
	\end{proof}

\section{Carleson measures on $T_B$}
\ \ \ \
As mentioned in the introduction, the importance of Carleson measures in studying the analytical properties of Toeplitz operators is significant. In the following, we will introduce two important theorems related to Carleson measures, which provide powerful support for the proof of the main results theorem \ref*{mianth1} and \ref*{mianth2}.
 
The following theorem \ref*{thCraleson} presents some equivalent characterizations of Carleson measures on tubes, and this theorem is derived from \cite{jiaxin2023bergman}.
\begin{theorem}\cite{jiaxin2023bergman}\label{thCraleson}
Suppose $0<p<\infty,r>0,\alpha>-1$ and $\mu$ is a positive Borel measure on the $T_B.$ Then the following conditions are equivalent.\\

	$(1)$ $\mu$ is a Carleson measure for $A_{\alpha}^{p}\left( T_B \right).$

	$(2)$ There exists a constant $C>0$ such that \[\int_{T_B}{\frac{\rho \left( z \right) ^{n+\alpha+1}}{\left| \rho \left( z,w \right) \right|^{2\left( n+\alpha+1 \right)}}d\mu \left( w \right)}\le C\] for all $z\in T_B$

	$(3)$ There exists a constant $C>0$ such that \[\mu \left( D\left( z,r \right) \right) \le C\rho \left( z \right) ^{n+\alpha+1}\] for all $z\in T_B.$

	$(4)$ There exista a constant $C>0$ such that \[\mu \left( D\left( a_k,r \right) \right) \le C\rho \left( a_k \right) ^{n+\alpha+1}\] for all $k\ge1,$ where $\left\{ a_k \right\} $ is an $r$-lattice in the Bergman metric.

\end{theorem}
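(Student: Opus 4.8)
The plan is to prove the cycle of implications $(1)\Rightarrow(2)\Rightarrow(3)\Rightarrow(4)\Rightarrow(1)$, with the geometry of $T_B$ handled entirely through the covering lemma and the comparability lemmas of Section 3. For $(1)\Rightarrow(2)$ I would test the Carleson inequality against the function
$$f_z(w)=\frac{\rho(z)^{(n+\alpha+1)/p}}{\rho(w,z)^{2(n+\alpha+1)/p}},\qquad z\in T_B\text{ fixed},\ w\in T_B.$$
Since $\text{Re}\,\rho(w,z)>0$ on $T_B$ (which follows from the inequality defining the base $B$), the fractional power is unambiguous and $f_z$ is holomorphic in $w$; and by the ``in particular'' part of Lemma \ref{lem jifendengshi} with $s=2(n+\alpha+1)$ and $t=\alpha$ (its hypotheses $t>-1$ and $s-t>n+1$ hold because $\alpha>-1$ and $n+\alpha+1>0$) one computes $\lVert f_z\rVert_{p,\alpha}^p=C_1(n,s,t)$, a finite constant not depending on $z$. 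As $|f_z(w)|^p=\rho(z)^{n+\alpha+1}/|\rho(z,w)|^{2(n+\alpha+1)}$, the Carleson estimate of $(1)$ applied to $f_z$ is precisely statement $(2)$.

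For $(2)\Rightarrow(3)$, restrict the integral in $(2)$ to the Bergman ball $D(z,r)$: by Lemma \ref{lem dengjia} one has $|\rho(z,w)|\simeq\rho(z)$ for $w\in D(z,r)$, so the integrand is $\simeq\rho(z)^{-(n+\alpha+1)}$ there and $\mu(D(z,r))\lesssim\rho(z)^{n+\alpha+1}$ follows. The implication $(3)\Rightarrow(4)$ is immediate, since every $a_k$ lies in $T_B$.

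The substantive step is $(4)\Rightarrow(1)$. Fix $f\in A_\alpha^p(T_B)$ and an $r$-lattice $\{a_k\}$. Because $\{D(a_k,r)\}$ covers $T_B$,
$$\int_{T_B}|f|^p\,d\mu\le\sum_k\int_{D(a_k,r)}|f|^p\,d\mu\le\sum_k\mu(D(a_k,r))\sup_{D(a_k,r)}|f|^p.$$
For $w\in D(a_k,r)$, Lemma \ref{lem ci tiao he fang da} gives $|f(w)|^p\lesssim\rho(w)^{-(n+\alpha+1)}\int_{D(w,r)}|f|^p\,dV_\alpha$; since $D(w,r)\subseteq D(a_k,2r)$ and $\rho(w)\simeq\rho(a_k)$ by Lemmas \ref{lem dengjia} and \ref{lem ceByuanpan}, this gives $\sup_{D(a_k,r)}|f|^p\lesssim\rho(a_k)^{-(n+\alpha+1)}\int_{D(a_k,2r)}|f|^p\,dV_\alpha$, and then $(4)$ yields $\int_{D(a_k,r)}|f|^p\,d\mu\lesssim\int_{D(a_k,2r)}|f|^p\,dV_\alpha$. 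Summing over $k$ and using the finite-overlap property $(3)$ of Lemma \ref{lem duliang} applied to the family $\{D(a_k,2r)\}$ gives $\int_{T_B}|f|^p\,d\mu\lesssim\lVert f\rVert_{p,\alpha}^p$, which is $(1)$.

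I expect two points to require care. The first is in $(1)\Rightarrow(2)$: choosing the exponents in $f_z$ so that its $A_\alpha^p$-norm is simultaneously finite and independent of $z$ — this is dictated by the exact value of the integral in Lemma \ref{lem jifendengshi} — together with the verification that $\rho(w,z)^{-2(n+\alpha+1)/p}$ really is a holomorphic element of $A_\alpha^p(T_B)$. The second is the discrete-to-continuous passage in $(4)\Rightarrow(1)$: one must apply the submean-value estimate on the enlarged balls $D(a_k,2r)$, control the overlap of that enlarged family, and use the comparability $\rho(w)\simeq\rho(a_k)$ on Bergman balls, in order to turn a sum over the lattice into a single integral over $T_B$. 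All remaining steps are routine applications of the lemmas gathered in Section 3.
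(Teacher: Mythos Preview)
The paper does not actually prove this theorem: immediately after the statement it says ``The proof of Theorem \ref{thCraleson} has been provided in \cite{jiaxin2023bergman}, and we will not repeat it here.'' So there is nothing to compare against directly. Your proposed cycle $(1)\Rightarrow(2)\Rightarrow(3)\Rightarrow(4)\Rightarrow(1)$ is correct, and in fact it matches exactly the template the paper does use when it proves the vanishing analogue (Theorem \ref{thVanishing Carleson}): the test function $g_a(z)=\bigl(\rho(a)^{n+\alpha+1}/|\rho(z,a)|^{2(n+\alpha+1)}\bigr)^{1/p}$ employed there for $(1)\Rightarrow(2)$ is your $f_z$, and the estimate
\[
\int_{D(a_k,r)}|f|^p\,d\mu \;\lesssim\; \frac{\mu(D(a_k,r))}{\rho(a_k)^{n+\alpha+1}}\int_{D(a_k,2r)}|f|^p\,dV_\alpha
\]
followed by finite overlap is precisely what the paper invokes for the hard direction of Theorem \ref{thVanishing Carleson}. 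Your two flagged points of care are the right ones and are handled by the lemmas you cite; the argument is complete.
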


The proof of Theorem \ref{thCraleson} has been provided in \cite{jiaxin2023bergman}, and we will not repeat it here.\\

Following that, we provide some equivalent characterizations of vanishing Carleson measures. These characterizations illustrate the property of being a vanishing Carleson measure for $A_{\alpha}^{p}\left( T_B \right)$, which depends neither on $p$ nor on $r$ in the following theorem \ref*{thVanishing Carleson}.

\begin{theorem}\label{thVanishing Carleson}
	Suppose $1<p<\infty,r>0,\alpha>-1 $ and $\mu$ is a positive Borel measure on the $T_B.$ Then the following conditions are equivalent.\\

		$(1)$ $\mu$ is a vanishing Carleson measure for $A_{\alpha}^{p}\left( T_B \right).$
		
		$(2)$ The measure $\mu$ satisfies \[\lim_{a\rightarrow \partial \widehat{T_B}} \int_{T_B}{\frac{\rho \left( a \right) ^{n+\alpha+1}}{\left| \rho \left( z,a \right) \right|^{2\left( n+\alpha+1 \right)}}d\mu \left( z \right)}=0.\]

		$(3)$ The measure $\mu$ has the property that \[\lim_{a\rightarrow \partial \widehat{T_B}} \frac{\mu \left( D\left( a,r \right) \right)}{\rho \left( a \right) ^{n+\alpha+1}}=0.\]

		$(4)$ For $\{a_k\}$ an $r$-lattice in the Bergman metric, we have \[\lim_{k\rightarrow \infty} \frac{\mu \left( D\left( a_k,r \right) \right)}{\rho \left( a_k \right) ^{n+\alpha+1}}=0.\]
\end{theorem}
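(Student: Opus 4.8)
The plan is to prove the two ``geometric'' equivalences $(2)\Leftrightarrow(3)\Leftrightarrow(4)$ first — these involve neither $p$ nor functional analysis and follow the scheme used for Theorem~\ref{thCraleson} — and then to close the loop through $(3)\Rightarrow(1)\Rightarrow(3)$. For $(3)\Leftrightarrow(4)$: an $r$-lattice $\{a_k\}$ has no accumulation point in $T_B$ (the balls $D(a_k,r/4)$ are pairwise disjoint), so $a_k\to\partial\widehat{T_B}$ and $(3)\Rightarrow(4)$ is immediate; conversely, given $z\in T_B$ pick $a_k$ with $z\in D(a_k,r)$, note $D(z,r)\subseteq D(a_k,2r)$, and cover $D(a_k,2r)$ by the finitely many lattice balls $D(a_j,r)$ with $\beta(a_j,a_k)<3r$, the bound $N=N(r)$ on their number being a volume count from the disjointness of the $D(a_j,r/4)$, Lemma~\ref{lem ceByuanpan}, and Lemma~\ref{lem dengjia}. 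Since $\rho(a_j)\simeq\rho(z)$ for these $j$, and as $z\to\partial\widehat{T_B}$ all such $a_j$ leave every compact set (Lemmas~\ref{lem dengjia} and~\ref{lem zitui}), summing the bound of $(4)$ over this finite set gives $(3)$.

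For $(2)\Leftrightarrow(3)$: by Lemma~\ref{lem dengjia} the integrand $\rho(a)^{n+\alpha+1}|\rho(z,a)|^{-2(n+\alpha+1)}$ in $(2)$ is comparable to $\rho(a)^{-(n+\alpha+1)}\simeq V_\alpha(D(a,r))^{-1}$ whenever $z\in D(a,r)$ (Lemma~\ref{lem ceByuanpan}); restricting the integral in $(2)$ to $D(a,r)$ then bounds $\mu(D(a,r))/\rho(a)^{n+\alpha+1}$ by a constant multiple of it, so $(2)\Rightarrow(3)$. For the converse I would fix an $r$-lattice $\{a_k\}$ (Lemma~\ref{lem duliang}), apply Lemma~\ref{lem dengjia} on each $D(a_k,r)$ to get
\[\int_{T_B}\frac{\rho(a)^{n+\alpha+1}}{|\rho(z,a)|^{2(n+\alpha+1)}}\,d\mu(z)\ \lesssim\ \sum_{k}\frac{\rho(a)^{n+\alpha+1}}{|\rho(a_k,a)|^{2(n+\alpha+1)}}\,\mu(D(a_k,r)),\]
and split the sum at a compact $E$ outside of which $(3)$ gives $\mu(D(a_k,r))\le\varepsilon\,\rho(a_k)^{n+\alpha+1}$. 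Each of the finitely many terms with $a_k\in E$ tends to $0$ as $a\to\partial\widehat{T_B}$, because $\rho(a)^{n+\alpha+1}|\rho(a_k,a)|^{-2(n+\alpha+1)}\le 2^{n+\alpha+1}|\rho(a_k,a)|^{-(n+\alpha+1)}\to 0$ by Lemma~\ref{lem qu bian liang} and Lemma~\ref{lem zitui}; the remaining tail is at most $\varepsilon$ times $\sum_k\rho(a)^{n+\alpha+1}\rho(a_k)^{n+\alpha+1}|\rho(a_k,a)|^{-2(n+\alpha+1)}$, and this sum — after replacing $\rho(a_k)^{n+\alpha+1}$ by $V_\alpha(D(a_k,r))$ (Lemma~\ref{lem ceByuanpan}), using the finite overlap of Lemma~\ref{lem duliang}, and evaluating the resulting integral via Lemma~\ref{lem jifendengshi} — is bounded by a constant independent of $a$. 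Letting $\varepsilon\to0$ gives $(2)$.

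For $(3)\Rightarrow(1)$: if $\{f_j\}\subset A_\alpha^p(T_B)$ is bounded and tends to $0$ uniformly on compact sets, then the sub-mean-value inequality of Lemma~\ref{lem ci tiao he fang da} with Tonelli's theorem (and $w\in D(z,r)\Leftrightarrow z\in D(w,r)$) yields
\[\int_{T_B}|f_j|^p\,d\mu\ \lesssim\ \int_{T_B}|f_j(w)|^p\left(\int_{D(w,r)}\frac{d\mu(z)}{\rho(z)^{n+\alpha+1}}\right)dV_\alpha(w)\ \simeq\ \int_{T_B}|f_j(w)|^p\,\frac{\mu(D(w,r))}{\rho(w)^{n+\alpha+1}}\,dV_\alpha(w),\]
using Lemma~\ref{lem dengjia} to replace $\rho(z)$ by $\rho(w)$ on $D(w,r)$. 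For $\varepsilon>0$ choose a compact $E$ outside which $\mu(D(w,r))/\rho(w)^{n+\alpha+1}<\varepsilon$ (this ratio is bounded on $T_B$: it vanishes at $\partial\widehat{T_B}$ by $(3)$ and is locally bounded since $\mu$ is locally finite), so the last integral is $\lesssim(\sup_{E}|f_j|)^p V_\alpha(E)+\varepsilon\sup_j\|f_j\|_{p,\alpha}^p$; letting $j\to\infty$ and then $\varepsilon\to0$ gives $\int_{T_B}|f_j|^p\,d\mu\to0$. Conversely, for $(1)\Rightarrow(3)$ use the unit vectors $f_a=K_{\alpha,a}/\|K_{\alpha,a}\|_{p,\alpha}$: by Lemma~\ref{lem ruo shou lian} and Lemma~\ref{lem ruo shou lian de deng jia tiao jian} they are bounded and tend to $0$ uniformly on compact sets as $a\to\partial\widehat{T_B}$, so $(1)$ forces $\int_{T_B}|f_a|^p\,d\mu\to0$; on the other hand, the explicit kernel, the norm formula of Lemma~\ref{lem zai shng he de da xiao}, and Lemma~\ref{lem dengjia} (which gives $|\rho(z,a)|\simeq\rho(a)$ on $D(a,r)$) show $|f_a(z)|^p\simeq\rho(a)^{-(n+\alpha+1)}$ there, so $\int_{T_B}|f_a|^p\,d\mu\ge\int_{D(a,r)}|f_a|^p\,d\mu\simeq\mu(D(a,r))/\rho(a)^{n+\alpha+1}$, and $(3)$ follows.

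The step I expect to be most delicate is $(3)\Rightarrow(2)$: one must check that each fixed term of the lattice sum decays as $a\to\partial\widehat{T_B}$ in \emph{both} regimes $\rho(a)\to0$ and $|a|\to\infty$ (this is precisely where Lemmas~\ref{lem qu bian liang} and~\ref{lem zitui} enter), and that the tail sum $\sum_k\rho(a)^{n+\alpha+1}\rho(a_k)^{n+\alpha+1}|\rho(a_k,a)|^{-2(n+\alpha+1)}$ is uniformly bounded, which I would deduce from the finite-overlap property and the exact integral of Lemma~\ref{lem jifendengshi}. A recurring secondary point — used silently in $(3)\Leftrightarrow(4)$ and $(1)\Rightarrow(3)$ — is that ``$a\to\partial\widehat{T_B}$'' transfers between a point and its nearby lattice centres, which again reduces to comparability of $\rho$ (Lemma~\ref{lem dengjia}) and to Lemma~\ref{lem zitui}.
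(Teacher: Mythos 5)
Your proposal is correct, but it takes a genuinely different route from the paper's. The paper proves the single cycle $(1)\Rightarrow(2)\Rightarrow(3)\Rightarrow(4)\Rightarrow(1)$: condition $(2)$ is reached only functional-analytically, by feeding the weakly null family built from normalized kernels (Lemmas \ref{lem zai shng he de da xiao} and \ref{lem ruo shou lian}) into the vanishing-Carleson property, and $(1)$ is recovered from $(4)$ by the lattice-sum estimate (Lemmas \ref{lem duliang}, \ref{lem ci tiao he fang da}, \ref{lem dengjia}), of which your Fubini argument for $(3)\Rightarrow(1)$ is the continuous analogue. You instead establish the purely measure-theoretic equivalences $(2)\Leftrightarrow(3)\Leftrightarrow(4)$ directly --- a finite-overlap covering argument for $(4)\Rightarrow(3)$, and a lattice decomposition with the tail controlled by Lemma \ref{lem jifendengshi} for $(3)\Rightarrow(2)$ --- and then attach $(1)$ via $(3)\Rightarrow(1)$ and $(1)\Rightarrow(3)$ with normalized kernels tested on $D(a,r)$. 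This buys the geometric equivalences without any function theory, at the cost of the delicate $(3)\Rightarrow(2)$ step the paper sidesteps by letting $(4)\Rightarrow(1)\Rightarrow(2)$ carry that load. One local repair there: your bound $\rho(a)^{n+\alpha+1}|\rho(a_k,a)|^{-2(n+\alpha+1)}\le 2^{n+\alpha+1}|\rho(a_k,a)|^{-(n+\alpha+1)}$ kills the head terms only when $|a|\to\infty$ (Lemma \ref{lem zitui}, transferred from $\mathbf{i}$ to a fixed $a_k$ via Lemma \ref{lem dengjia}); when $a$ tends to a finite boundary point, $|\rho(a_k,a)|$ stays bounded, and you must instead combine $\rho(a)\to 0$ with the lower bound $2|\rho(a_k,a)|\ge\rho(a_k)>0$ from Lemma \ref{lem qu bian liang} --- exactly the two regimes you flag, so this is a fixable slip, not a structural gap; like the paper, you also tacitly assume $\mu$ is finite on compact subsets.
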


\begin{proof}
	
	$(1) \Rightarrow (2)$: According to Lemmas \ref{lem zai shng he de da xiao} and \ref{lem ruo shou lian}, take 
	\[g_a(z)=\left( \frac{\rho \left( a \right) ^{n+\alpha +1}}{\left| \rho \left( z,a \right) \right|^{2\left( n+\alpha +1 \right)}} \right) ^{1/p}.\]
	By Lemma  \ref{lem zai shng he de da xiao} and \ref{lem ruo shou lian}, we see that $g_a(z)$ converges weakly to $0$ in $A_\alpha^p\left( T_B \right)$ as $a\rightarrow \partial \widehat{T_B}$. Therefore,
	\[\int_{T_B}{\frac{\rho \left( a \right) ^{n+\alpha+1}}{\left| \rho \left( z,a \right) \right|^{2\left( n+\alpha+1 \right)}}d\mu \left( z \right)}=C\int_{T_B}{\left| g_a\left( z \right) \right|^pd\mu \left( z \right)}\rightarrow 0\] 
	as $a\rightarrow \partial \widehat{T_B}$, which shows that  (2)  holds.\\
	
	$(2) \Rightarrow (3)$:
	Obviously, the following equation holds:
	\[\lim_{a\rightarrow \partial \widehat{T_B}} \int_{T_B}{\frac{\rho \left( a \right) ^{n+\alpha+1}}{\left| \rho \left( z,a \right) \right|^{2\left( n+\alpha+1 \right)}}d\mu \left( z \right)}=0.\] 
	By Lemma \ref{lem dengjia}, $\left| \rho \left( z,a \right) \right|$ and $\rho \left( z \right) $ are comparable when $z\in D\left(a,r\right)$. Therefore, $\mu$ has that protery.\\
	
	$(3) \Rightarrow (4)$:
	We know that $a_k\rightarrow \partial \widehat{T_B}$ as $k\rightarrow \infty $ if $\{a_k\}$ is an $r$-lattice in the Bergman metric. So the conclusion is trivial.\\
	
	$(4) \Rightarrow (1)$:
	If the equality holds, we show that the inclusion map $i_p$ from $A_\alpha^p\left( T_B \right)$ into $L^p(T_B,d\mu)$ is compact. 
	
	To this end, we assume that $\{f_j\}$ is a sequence in $A_\alpha^p\left( T_B \right)$ that converges to $0$ uniformly on compact subsets of $T_B$ and $\lVert f_j \rVert _{p,\alpha}\le M$ for some positive constant $M$.
	
	 By assumption, given $\varepsilon >0$ there exists a positive integer $N_0$ such that \[\frac{\mu \left( D\left( a_k,r \right) \right)}{\rho \left( a_k \right) ^{n+\alpha+1}}<\varepsilon, \    k\ge N_0.\]
By Lemma \ref{lem duliang}, there is a constant $C>0$ such that 
$$
	\begin{aligned}
	&\sum_{k=N_0}^{\infty}{\int_{D\left( a_k,r \right)}{\left| f_j\left( z \right) \right|^pd\mu \left( z \right)}}\\
	&\le C\sum_{k=N_0}^{\infty}{\frac{\mu \left( D\left( a_k,r \right) \right)}{\rho \left( a_k \right) ^{n+\alpha+1}}}\int_{D\left( a_k,2r \right)}{\left| f_j\left( z \right) \right|^pdV_\alpha\left( z \right)}
	\\
	&\le \varepsilon CN\int_{T_B}{\left| f_j\left( z \right) \right|^pdV_\alpha\left( z \right)}\le \varepsilon CNM^p
	\end{aligned}
$$
	for all $j$, where $C,N$ and $M$ are all independent of $\varepsilon$. Since \[\lim_{j\rightarrow \infty} \sum_{k=1}^{N_0-1}{\int_{D\left( a_k,r \right)}{\left| f_j\left( z \right) \right|^pd\mu \left( z \right)}}=0,\] 
	we have 
	$$
	\begin{aligned}
	\lim_{j\rightarrow \infty} &sup\int_{T_B}{\left| f_j\left( z \right) \right|^pd\mu \left( z \right)}
	\\
	&\le \lim_{j\rightarrow \infty} sup\left[ \sum_{k=1}^{N_0-1}{\int_{D\left( a_k,r \right)}{\left| f_j\left( z \right) \right|^pd\mu \left( z \right)}}+\sum_{k=N_0}^{\infty}{\int_{D\left( a_k,r \right)}{\left| f_j\left( z \right) \right|^pd\mu \left( z \right)}} \right] 
	\\
	&\le \varepsilon CNM^p.
	\end{aligned}
	$$
	Since $\varepsilon$ is arbitrary, we know that $\mu$ is a vanishing Carleson measure. The proof of the theorem is complete.	

\end{proof}

\section{Dense subspaces of $A_\alpha^p\left( T_B \right)$}
\ \ \ \
For the case of unit disk $\mathbb{D}$, we know that the space $H^\infty{(\mathbb{D})}$ of bounded analytic functions  is dense in the Bergman spaces $A_\alpha^p(\mathbb{D})$. Since the Toeplitz operator is an integral operator, it is always well-defined, making the Toeplitz operator densely defined in $A_\alpha^p(\mathbb{D})$. 

However, in the case of tubes, there is not dense property as good as in a unit disk. In order to study the theory of Toeplitz operators on the Bergman space $A_\alpha^p\left( T_B \right)$ over tubular domains $T_B$,  we have to first find a dense subspaces of $A_\alpha^p\left( T_B \right)$, and then obtain the Toeplitz operators densely defined on $A_\alpha^p\left( T_B \right)$.

 To do this, we first introduce the following definition: 

Let $\mathcal{M}_+$ be the set of all positive Borel measure $\mu$ such that \[\int_{T_B}{\frac{d\mu \left( z \right)}{\left| \rho \left( z,\mathbf{i} \right) \right|^t}}<\infty \] for some $t>0$.

Given $t \in \mathbb{R}$, we denote by $S_t$ the vector spaces of functions $f$ holomorphic in $T_B$ satisfying \[\mathop{sup}_{z\in T_B}\left| \rho \left( z,\mathbf{i} \right) \right|^t\left| f\left( z \right) \right|<\infty .\]

\begin{theorem}
	If $1\le p<\infty$, $\alpha>-1$, $t>n+(\alpha+1)/p$ and $\mu \in \mathcal{M}_+$, then the Toeplitz
	operator $T_\mu$ is densely defined on $A_\alpha^p(T_B)$.
\end{theorem}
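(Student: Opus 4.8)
The plan is to show that $S_t \cap A_\alpha^p(T_B)$ is dense in $A_\alpha^p(T_B)$ and that $S_t$ lies in the natural domain of $T_\mu$ when $\mu \in \mathcal{M}_+$; combining these two facts gives that $T_\mu$ is densely defined. First I would treat the domain issue: for $f \in S_t$, one has $|f(w)| \lesssim |\rho(w,\mathbf{i})|^{-t}$, so the Toeplitz integral $T_\mu f(z) = \int_{T_B} K_\alpha(z,w) f(w)\, d\mu(w)$ is dominated, for fixed $z$, by $\int_{T_B} |\rho(z,w)|^{-(n+\alpha+1)} |\rho(w,\mathbf{i})|^{-t}\, d\mu(w)$. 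Using Lemma \ref{lem qu bian liang} (so that $|\rho(z,w)| \gtrsim \rho(z)$) this is bounded by $C_z \int_{T_B} |\rho(w,\mathbf{i})|^{-t}\, d\mu(w)$, which is finite because $\mu \in \mathcal{M}_+$ — after possibly shrinking $t$, which is harmless since $|\rho(w,\mathbf{i})| \gtrsim 1$ on $T_B$ by Lemma \ref{lem zitui}'s companion inequality $2\rho(w,\mathbf{i}) \ge 1$. Hence $T_\mu f$ is well-defined pointwise on every $f \in S_t$.

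The heart of the matter is the density of $S_t \cap A_\alpha^p(T_B)$. Here I would exploit the biholomorphism $\varPhi : \mathbb{B}_n \to T_B$ to transfer the problem to the ball. For a function $f \in A_\alpha^p(T_B)$, the change of variables (using the real Jacobian from Lemma \ref{lem yingshe de xingzhi}(1) and the identity $1-|\varPhi^{-1}(z)|^2 = \rho(z)/|\rho(z,\mathbf{i})|^2$) shows that $f \mapsto (f\circ\varPhi)\cdot (\text{Jacobian factor})^{1/p}$ is an isometry from $A_\alpha^p(T_B)$ onto $A_\beta^p(\mathbb{B}_n)$ for an appropriate $\beta$; the weight on the ball side carries a power of $|1+z_n|$. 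On the ball, polynomials (or dilates $f_s(z) = f(sz)$, $s \uparrow 1$) are dense in the weighted Bergman space, and a polynomial $g$ on $\mathbb{B}_n$ pulls back to a function on $T_B$ whose growth is governed by powers of $|\rho(z,\mathbf{i})|$: indeed $1+[\varPhi^{-1}(z)]_n = 1/\rho(z,\mathbf{i})$ and $\varPhi^{-1}(z)$ has components that are ratios of the form $(\text{polynomial in }z)/\rho(z,\mathbf{i})$, so any polynomial in $\varPhi^{-1}(z)$, divided by the appropriate Jacobian normalization, decays like $|\rho(z,\mathbf{i})|^{-m}$ for some $m$ that can be made as large as we wish, in particular $\ge t$. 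This produces an approximating sequence inside $S_t \cap A_\alpha^p(T_B)$.

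The main obstacle I anticipate is bookkeeping the exponents: one must check that the specific weight $\beta$ and the power of $|1+z_n|$ appearing after the $\varPhi$-pullback are such that (a) $A_\beta^p(\mathbb{B}_n)$ is genuinely a weighted Bergman space in which polynomials are dense — this needs $\beta > -1$, which should follow from $\alpha > -1$ together with the constraint $t > n + (\alpha+1)/p$ controlling the admissible extra $|1+z_n|$ power — and (b) that the decay $|\rho(z,\mathbf{i})|^{-t}$ is actually achieved, not merely $|\rho(z,\mathbf{i})|^{-t+\epsilon}$, for the approximants; this is where the hypothesis $t > n + (\alpha+1)/p$ enters quantitatively, ensuring the Jacobian normalization exponent $2(n+\alpha+1)/p$ against the polynomial decay leaves enough room. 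Once these exponent inequalities are verified, density follows and, combined with the first paragraph, shows $T_\mu$ is densely defined on $A_\alpha^p(T_B)$.
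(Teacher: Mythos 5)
Your first paragraph (well-definedness of the Toeplitz integral on $S_t$ via Lemma \ref{lem qu bian liang} together with $2|\rho(w,\mathbf{i})|\ge 1$) is essentially the paper's argument and is fine. The gap is in the density half. Track the exponents in your transfer to the ball: with $z=\varPhi(\xi)$ one has $\rho(z)=\frac{1-|\xi|^2}{|1+\xi_n|^2}$ and $J_R\varPhi(\xi)=2^{n+1}|1+\xi_n|^{-2(n+1)}$, so the isometry is $g(\xi)=f(\varPhi(\xi))(1+\xi_n)^{-2(n+\alpha+1)/p}$ onto $A_\alpha^p(\mathbb{B}_n)$ (the ball weight is exactly $\alpha$, so your worry (a) is vacuous and the hypothesis on $t$ plays no role there). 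Pulling a polynomial $q$ back therefore gives, by Lemma \ref{lem yingshe de xingzhi}(3), $f(z)=q(\varPhi^{-1}(z))\,\rho(z,\mathbf{i})^{-2(n+\alpha+1)/p}$, and since $q(\varPhi^{-1}(z))$ is only bounded and in general does not tend to zero (take $q\equiv 1$), the decay you obtain is \emph{exactly} $|\rho(z,\mathbf{i})|^{-2(n+\alpha+1)/p}$ and no better. Your claim that the exponent $m$ can be made as large as we wish, in particular $\ge t$, is therefore unjustified: the exponent is fixed, it can be far smaller than $t$ (and $t$ genuinely needs to be allowed large, since $\mathcal{M}_+$ only guarantees $\int_{T_B}|\rho(w,\mathbf{i})|^{-t}d\mu(w)<\infty$ for \emph{some} $t$), and for $p>2+(\alpha+1)/n$ it even drops below the threshold $n+(\alpha+1)/p$. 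So your approximants need not lie in $S_t$, and the two halves of your argument do not connect. Repairing this route would require density in $A_\alpha^p(\mathbb{B}_n)$ of functions divisible by arbitrarily high powers of $(1+\xi_n)$ (i.e.\ vanishing to high order at the boundary point $(0',-1)=\varPhi^{-1}(\infty)$), a cyclicity-type fact you neither state nor prove.

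For comparison, the paper sidesteps this entirely: it truncates, $f_j=f\chi_{\overline{D(\mathbf{i},j)}}$, so that $f_j\to f$ in $L_\alpha^p(T_B)$, and then applies the projection $P_{t-n-1}$, which is bounded on $L_\alpha^p(T_B)$ precisely when $t-n-1>(\alpha+1)/p-1$, i.e.\ $t>n+(\alpha+1)/p$ --- this is exactly where the hypothesis on $t$ enters. Because each $f_j$ is supported in the fixed bounded set $Q_j$, the kernel estimate $\sup_{w\in Q_j}\rho(w)^{t-n-\alpha-1}|\rho(z,w)|^{-t}\lesssim|\rho(z,\mathbf{i})|^{-t}$ shows $P_{t-n-1}f_j\in S_t$ for the \emph{given} $t$, while boundedness of the projection gives $P_{t-n-1}f_j\to f$ in norm. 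In other words, the paper manufactures the required decay rate $t$ from the kernel of $P_{t-n-1}$, whereas your polynomial pullbacks can never decay faster than the fixed Jacobian rate $2(n+\alpha+1)/p$; that is the concrete point at which your proposal fails.
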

	\begin{proof}
		To prove this theorem, we need to find a dense subspace $S_t$ of $A_\alpha^p(T_B)$, such that for any $f\in S_t$, the following equation holds: $$\int\limits_{T_B}{|}K_{\alpha}\left( z,w \right) f\left( w \right) |d\mu \left( w \right) <\infty .$$
		Now we prove the density of $S_t$ in $A_\alpha^p(T_B)$. Let $f_j=f\cdot \chi_{Q_j}$ for $j=1,2,\cdots$, where $Q_j=\overline{D(\mathbf{i},j)}$ and $\chi_{Q_j}$ is the characteristic function of $Q_j$. Clearly, $\lVert f_j-f \rVert_{p,\alpha}\rightarrow 0$ as $j\rightarrow \infty$.
		Given $\lambda >-1$, let $P_{\lambda}$ be the integral operator given by
		\[P_{\lambda}g(z) = c_{\lambda}\int_{T_B}{\frac{\rho(w)^{\lambda}}{\rho(z,w)^{n+1+\lambda}}g(w)dV(w)}, \ z\in T_B,\]
		where $c_{\lambda}=\frac{2^{n+1+2\lambda}\Gamma(n+1+\lambda)}{\pi^n\Gamma(1+\lambda)}$. It was shown in \cite[Theorem 3.2]{1993integral} that $P_\lambda$ is a bounded projection from $L_\alpha^p(T_B)$ onto $A_\alpha^p(T_B)$, provided that $\lambda>(\alpha+1)/p-1$.
		Taking $\lambda=t-n-1$, by Holder's inequality, we obtain
		\[\left| P_{t-n-1}f_j(z) \right| \le c_{t-n-1}\lVert f \rVert_{p,\alpha}V_{\alpha}(Q_j)^{1/p'}\sup_{w\in Q_j}\frac{\rho(w)^{t-n-\alpha -1}}{\left| \rho(z,w) \right|^t}\]
		for all $z\in T_B$. Combined with Lemma \ref{lem ceByuanpan}, we only need to consider the upper bound of
		\[\sup_{w\in Q_j}\frac{\rho(w)^{t-n-\alpha-1}}{\left| \rho(z,w) \right|^t}.\]
		Since the $Q_j$ are fixed, a simple calculation yields
		\[\left| P_{t-n-1}f_j(z) \right| \le \frac{C\lVert f \rVert_{p,\alpha}}{\left| \rho(z,\mathbf{i}) \right|^{t}}\]
		for all $z\in T_B$, where $C>0$ is a constant independent of $z$. \\
		Thus $$P_{t-n-1}f_j(z) \in S_{t}.$$
		Since $f\in A_\alpha^p(T_B)$ and $P_{\alpha-n-1}$ is a bounded projection from $L_\alpha^p(T_B)$ onto $A_\alpha^p(T_B)$, we have
		\[\lVert P_{\alpha -n-1}f_j(z) -f \rVert_{p,\alpha}=\lVert P_{\alpha -n-1}\left( f_j(z) -f \right) \rVert_{p,\alpha}\le \lVert P_{\alpha -n-1} \rVert \lVert f_j-f \rVert_{p,\alpha}\rightarrow 0\]
		as $j\rightarrow\infty$. This implies that $S_t$ is dense in $A_\alpha^p(T_B)$.

		According to Lemma \ref{lem qu bian liang}, we know that there exists a constant $C>0$ such that 
		$$ 
		\begin{aligned}
			\int\limits_{T_B}{|}K_{\alpha}\left( z,w \right) f\left( w \right) |d\mu \left( w \right) &\lesssim \frac{1}{\rho \left( z \right) ^{n+\alpha +1}}\int\limits_{T_B}{\frac{\left| \rho \left( w,\mathbf{i} \right) \right|^t\left| f\left( w \right) \right|}{\left| \rho \left( w,\mathbf{i} \right) \right|^t}d\mu \left( w \right)}\\
			&\lesssim \frac{1}{\rho \left( z \right) ^{n+\alpha +1}}\int\limits_{T_B}{\frac{d\mu \left( w \right)}{\left| \rho \left( w,\mathbf{i} \right) \right|^t}}\\
			&<\infty 
		\end{aligned}
		$$ 
		holds. This completes the proof of the theorem.	

	\end{proof}

\section{Characterization of Toeplitz operators}
\ \ \ \
Building on the foundational results previously, we can now to  prove the following result:
\begin{theorem}\label{mianth1}
	Suppose that $r>0$, $1<p<\infty$, $0<q<\infty$, $\alpha>-1$ and that $\mu\in \mathcal{M}_+$. Then the following conditions are equivalent:\\

		$(1)$ $T_{\mu}$ is bounded on $A_\alpha^p(T_B)$.
		
		$(2)$ $\widetilde{\mu}$ is a bounded function on $T_B$.
		
		$(3)$ $\widehat{\mu}_r$ is a bounded function on $T_B$.
		
		$(4)$ $\mu$ is a Carleson measure for $A_\alpha^q(T_B)$.

\end{theorem}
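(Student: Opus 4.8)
The plan is to prove the cycle of implications $(1)\Rightarrow(2)\Rightarrow(3)\Rightarrow(4)\Rightarrow(1)$, using the characterizations of Carleson measures from Theorem \ref{thCraleson} to bridge the geometric conditions $(3)$ and $(4)$, and using the reproducing kernel as the test function for the analytic implications. First, for $(1)\Rightarrow(2)$, I would test the bounded operator $T_\mu$ against the normalized reproducing kernels $k_z = K_{\alpha,z}/\lVert K_{\alpha,z}\rVert_{2,\alpha}$. Since $\widetilde{\mu}(z) = \int_{T_B}|k_z(w)|^2\,d\mu(w) = \langle T_\mu k_z, k_z\rangle$ (after unwinding the definition of $T_\mu$ and using the reproducing property), boundedness of $T_\mu$ on $A^p_\alpha$ together with Lemma \ref{lem zai shng he de da xiao} to control $\lVert k_z\rVert_{p,\alpha}$ and the duality pairing gives $|\widetilde{\mu}(z)|\le \lVert T_\mu\rVert$ uniformly in $z$; a small amount of care is needed to move between the $p$-norm on which $T_\mu$ acts and the $L^2$-flavored quantity $\widetilde{\mu}$, which is handled via the explicit kernel size in Lemma \ref{lem zai shng he de da xiao} and Lemma \ref{lem jifendengshi}.

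For $(2)\Rightarrow(3)$, I would show pointwise that $\widehat{\mu}_r(z)\lesssim \widetilde{\mu}(z)$. On the Bergman ball $D(z,r)$ the kernel quantity $|k_z(w)|^2 = |K_\alpha(z,w)|^2/K_\alpha(z,z)$ is bounded below by a constant (depending on $r$) times $1/V_\alpha(D(z,r))$: indeed by Lemma \ref{lem dengjia} we have $|\rho(z,w)|\simeq\rho(z)$ for $w\in D(z,r)$, so $|K_\alpha(z,w)|\simeq \rho(z)^{-(n+\alpha+1)}$, while $K_\alpha(z,z)\simeq\rho(z)^{-(n+\alpha+1)}$ and $V_\alpha(D(z,r))\simeq\rho(z)^{n+\alpha+1}$ by Lemma \ref{lem ceByuanpan}. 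Hence
$$\widetilde{\mu}(z)\ge \int_{D(z,r)}|k_z(w)|^2\,d\mu(w)\gtrsim \frac{\mu(D(z,r))}{V_\alpha(D(z,r))} = \widehat{\mu}_r(z),$$
and taking suprema gives $(3)$. The implication $(3)\Rightarrow(4)$ is essentially the content of Theorem \ref{thCraleson}: boundedness of $\widehat{\mu}_r$ means $\mu(D(z,r))\lesssim\rho(z)^{n+\alpha+1}$ for all $z$, which is condition $(3)$ of that theorem (with exponent $q$, noting that the geometric condition there is independent of the exponent), hence $\mu$ is a Carleson measure for $A^q_\alpha(T_B)$.

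The remaining implication $(4)\Rightarrow(1)$ is where I expect the main difficulty. The idea is to estimate $\langle T_\mu f, g\rangle$ for $f\in A^p_\alpha(T_B)$ (taken in the dense subspace $S_t$ so everything is absolutely convergent) and $g\in A^{p'}_\alpha(T_B)$, writing
$$\langle T_\mu f, g\rangle = \int_{T_B} f(w)\overline{g(w)}\,d\mu(w),$$
and bounding this by $\lVert f\rVert_{p,\alpha}\lVert g\rVert_{p',\alpha}$. Here the hypothesis $(4)$ — Carleson for $A^q_\alpha$ for the given $q$ — must be upgraded to Carleson for all exponents; this is legitimate because, via Theorem \ref{thCraleson}, being a Carleson measure is equivalent to the purely geometric condition $\mu(D(a_k,r))\lesssim\rho(a_k)^{n+\alpha+1}$ on an $r$-lattice, which does not see the exponent. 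Then I would cover $T_B$ by an $r$-lattice $\{a_k\}$ as in Lemma \ref{lem duliang}, use the sub-mean-value estimate of Lemma \ref{lem ci tiao he fang da} to dominate $|f(w)|$ and $|g(w)|$ on $D(a_k,r)$ by averages over $D(a_k,2r)$, apply $\mu(D(a_k,r))\lesssim V_\alpha(D(a_k,r))$, invoke Hölder's inequality with exponents $p, p'$, and finally use the finite-overlap property $(3)$ of Lemma \ref{lem duliang} to sum the local pieces back into the global norms $\lVert f\rVert_{p,\alpha}$ and $\lVert g\rVert_{p',\alpha}$. Taking the supremum over $g$ in the unit ball of $A^{p'}_\alpha$ and using the density of $S_t$ then yields boundedness of $T_\mu$ on $A^p_\alpha(T_B)$. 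The technical points to watch are the exponent-independence of the Carleson condition and the careful bookkeeping when transferring the measure $\mu$ to Lebesgue measure $V_\alpha$ on slightly enlarged balls.
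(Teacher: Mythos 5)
Your chain $(1)\Rightarrow(2)\Rightarrow(3)\Rightarrow(4)\Rightarrow(1)$ is correct, and the first three implications coincide with the paper's: the same test against $k_z$ with Lemma \ref{lem zai shng he de da xiao} for $(1)\Rightarrow(2)$, the pointwise comparison $\hat{\mu}_r(z)\lesssim\tilde{\mu}(z)$ via Lemmas \ref{lem dengjia} and \ref{lem ceByuanpan} for $(2)\Rightarrow(3)$ (the paper simply routes this through Theorem \ref{thCraleson} and the volume identity), and the exponent-independence of Theorem \ref{thCraleson} for $(3)\Rightarrow(4)$. Where you genuinely diverge is $(4)\Rightarrow(1)$: the paper uses the $A^1_\alpha$-Carleson property to dominate $\int_{T_B}|K_\alpha(z,w)f(w)|\,d\mu(w)$ for $f\in S_t$ by a weighted kernel integral, transfers that integral to $\mathbb{B}_n$ via $\varPhi$, and invokes the Forelli--Rudin-type estimate of \cite[Theorem 3.1]{zhang2018integral} (with the logarithmic bound and the inequality $\log x<x^{\varepsilon}$) to conclude $T_\mu f\in A^p_\alpha(T_B)$ and then extend by density; you instead run the duality argument $\left< T_{\mu}f,g \right>=\int_{T_B}f\bar{g}\,d\mu$, use the exponent-independent Carleson embedding at both $p$ and $p'$, and apply H\"older --- which is exactly the scheme the paper itself uses later, in the proof of $(4)\Rightarrow(1)$ of the compactness result Theorem \ref{mianth2}. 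Your route is arguably cleaner: it avoids the transfer to the ball and the external integral estimate, and it yields directly the quantitative bound $\lVert T_\mu\rVert\lesssim$ (Carleson constant), whereas the paper's displayed estimate only exhibits finiteness of $\lVert T_\mu f\rVert_{p,\alpha}$ on $S_t$. The price is that you must justify the Fubini interchange (which your restriction to the dense subspace $S_t$ of Theorem 5.1, available since $\mu\in\mathcal{M}_+$, handles at the same level of rigor as the paper) and you need the duality $(A^p_\alpha)^*\cong A^{p'}_\alpha$ under the $V_\alpha$-pairing, i.e. the boundedness of the projection $P_\alpha$ on $L^{p}_\alpha(T_B)$ already cited in Section 5; also, your lattice/sub-mean-value re-derivation of the embedding is redundant, since condition $(4)$ together with Theorem \ref{thCraleson} already gives $\lVert f\rVert_{L^p(\mu)}\lesssim\lVert f\rVert_{p,\alpha}$ and $\lVert g\rVert_{L^{p'}(\mu)}\lesssim\lVert g\rVert_{p',\alpha}$ outright.
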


\begin{proof}
$(1) \Rightarrow (2)$:
We have the well-known conclusion \[\left< T_{\mu}k_z,k_z \right> =\int_{T_B}{\left| k_z\left( w \right) \right|^2d\mu \left( w \right)}=\tilde{\mu}\left( z \right),\] And according to the conditions and lemma \ref{lem zai shng he de da xiao} we have\[\left| \left< T_{\mu}k_z,k_z \right> \right|\le \lVert T_{\mu}k_z \rVert _{p,\alpha}\lVert k_z \rVert _{p',\alpha}\le \frac{\lVert T_{\mu} \rVert \lVert K_z \rVert _{p,\alpha}\lVert K_z \rVert _{p',\alpha}}{K\left( z,z \right)}=C\lVert T_{\mu} \rVert.\] 
Therefore, $\tilde{\mu}(z)$ is a bounded function on $T_B$.\\

$(2) \Rightarrow (3)$:
By the definition of $\hat{\mu}_r$ and Theorem \ref{thCraleson}, the expected results are obtained by the following identity\[V_{\alpha}\left( D\left( z,r \right) \right) =K\rho \left( z \right) ^{n+\alpha +1},\] where K is a constant independent of z.\\

$(3) \Rightarrow (4)$:
This result can be obtained from the fact that the Carleson measure in Theorem \ref{thCraleson} is not dependent on $p$, so it is trivial.\\

$(4) \Rightarrow (1)$:
Since $\mu$ is a Carlson measure for $A_\alpha^q(T_B)$, and according to the Carlson measure does not depend on the index we know $\mu$ is a Carlson measure for $A_\alpha^1(T_B)$. Combining $T_{\mu} f$ is well defined on $S_t$, there exists a constant $C>0$ such that
\begin{align*}
\int\limits_{T_B} |K_\alpha(z,w) f(w)| d\mu(w) ~\leq~& C \int\limits_{T_B} |K_\alpha(z,w) f(w)| dV_\alpha(w)\\
\leq~& C\int\limits_{T_B}{\frac{\rho \left( w \right) ^{\alpha}dV\left( w \right)}{|\rho \left( z,w \right) |^{n+\alpha +1}|\rho \left( w,\mathbf{i} \right) |^t}}.
\end{align*}

According to the holomorphic automorphism from $\mathbb{B}_n$ to $T_B$, let $$w=\varPhi \left( \xi \right),z=\varPhi \left( \eta \right),$$ and make a variable transformation of the above integral. we obtain

$$
\begin{aligned}
&\int\limits_{T_B}{\frac{\rho \left( w \right) ^{\alpha}dV\left( w \right)}{|\rho \left( z,w \right) |^{n+\alpha +1}|\rho \left( w,\mathbf{i} \right)|^t}}\\
&=2^{n+1}\left| 1+\eta _n \right|^{n+\alpha +1}\int\limits_{\mathbb{B}_n}{\frac{\left( 1-\left| \xi \right|^2 \right) ^{\alpha}dV\left(\xi\right)}{\left| 1-\left< \eta ,\xi \right> \right|^{n+\alpha +1}|1+\xi _n|^{n+\alpha +1-t}}}.
\end{aligned}
$$
By \cite[Theorem 3.1]{zhang2018integral}, the last integral is dominated by a constant multiple of
\[\frac{1}{|1+\eta_n|^{n+\alpha+1-t}}\left(1+\log\frac{|1+\eta_n|}{1-|\eta|^2}\right),\]
Thus, there exists a constant $C>0$ such that
\begin{align*}
\int\limits_{T_B}{\frac{\rho \left( u \right) ^{\alpha}dV\left( u \right)}{|\rho \left( z,u \right) |^{n+\alpha +1}|\rho \left( u,\mathbf{i} \right) |^t}}
~\leq~& C |1+\eta_n|^t\left(1+ \log \frac {1}{1-|\eta|^2}\right)\\
~=~& \frac {C}  {|\rho(z,\mathbf{i})|^t} \left(1+\log \frac {|\rho(z,\mathbf{i})|^2}{\rho(z)}\right)
\end{align*}
holds. 

By the elementary inequality $\log x<x^{\varepsilon}$, where $$x>1,0<\varepsilon <\min \{\left( \alpha +1 \right) /p,t-\left( \alpha +n+1 \right) /p\},$$ we have  \[\int\limits_{T_B} |T_{\mu}f(z)|^p dV_\alpha(z) ~\leq~ C\left( \int\limits_{T_B}\frac{dV_\alpha(z)}{|\rho(z,\mathbf{i})|^{pt}} +\int\limits_{T_B}\frac{\rho(z)^{-p\epsilon}}{|\rho(z,\mathbf{i})|^{p(t-2\epsilon)}}dV_\alpha(z)\right).\] This shows that $T_\mu$ is densely defined on $S_t$ and can be extended to be a bounded operator on $A_\alpha^p(T_B)$.

This completes the proof of Theorem \ref{mianth1}.
\end{proof}
\\

The above theorem \ref*{mianth1} characterizes the boundedness of the Toeplitz operator $T_{\mu}$, and now we will characterize the compactness of $T_{\mu}$ in the following theorem \ref*{mianth2}.

\begin{theorem}\label{mianth2}
	Suppose that $r>0$, $1<p,q<\infty$, $\alpha>-1$ and that $\mu\in \mathcal{M}_+$. Then the following conditions are equivalent:\\

		$(1)$ $T_{\mu}$ is compact on $A_\alpha^p(T_B)$.
		
		$(2)$ $\widetilde{\mu}$ belongs to $C_0(T_B)$.
		
		$(3)$ $\widehat{\mu}_r$ belongs to $C_0(T_B)$.
		
		$(4)$ $\mu$ is a vanishing Carleson measure for $A_\alpha^q(T_B)$.

\end{theorem}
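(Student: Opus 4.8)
The plan is to establish the cycle $(1)\Rightarrow(2)\Rightarrow(3)\Rightarrow(4)\Rightarrow(1)$, mirroring the proof of Theorem \ref{mianth1} but with every boundedness/Carleson statement replaced by its vanishing/compact analogue. For $(1)\Rightarrow(2)$: by Lemma \ref{lem ruo shou lian} the normalized kernels $\kappa_z:=K_{\alpha,z}/\lVert K_{\alpha,z}\rVert_{p,\alpha}$ converge weakly to $0$ in $A_\alpha^p(T_B)$ as $z\to\partial\widehat{T_B}$, so compactness of $T_\mu$ forces $\lVert T_\mu\kappa_z\rVert_{p,\alpha}\to 0$. Writing $\widetilde{\mu}(z)=\langle T_\mu k_z,k_z\rangle$ as in Theorem \ref{mianth1}, pairing $A_\alpha^p$ against $A_\alpha^{p'}$, and applying Hölder together with the norm formula of Lemma \ref{lem zai shng he de da xiao} (which makes $\lVert K_{\alpha,z}\rVert_{p,\alpha}\lVert K_{\alpha,z}\rVert_{p',\alpha}/K_\alpha(z,z)$ a constant independent of $z$), one gets $\widetilde{\mu}(z)\lesssim \lVert T_\mu\kappa_z\rVert_{p,\alpha}\to 0$; continuity of $\widetilde{\mu}$ is routine via dominated convergence and Lemma \ref{lem dengjia}, so $\widetilde{\mu}\in C_0(T_B)$.

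For $(2)\Rightarrow(3)$ the key is a pointwise lower bound on the normalized kernel: if $w\in D(z,r)$ then $\beta(z,w)<r$, so Lemma \ref{lem dengjia} (applied with $u=z$) gives $\rho(z)=|\rho(z,z)|\simeq|\rho(z,w)|$, hence $|k_z(w)|^2=|K_\alpha(z,w)|^2/K_\alpha(z,z)\simeq\rho(z)^{-(n+\alpha+1)}\simeq V_\alpha(D(z,r))^{-1}$ by Lemma \ref{lem ceByuanpan}. Integrating over $D(z,r)$ yields $\widehat{\mu}_r(z)\lesssim\widetilde{\mu}(z)$, so $\widetilde{\mu}\in C_0(T_B)$ forces $\widehat{\mu}_r\in C_0(T_B)$. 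Then $(3)\Rightarrow(4)$ is immediate: by Lemma \ref{lem ceByuanpan}, $\widehat{\mu}_r\in C_0(T_B)$ is equivalent to $\mu(D(z,r))/\rho(z)^{n+\alpha+1}\to 0$ as $z\to\partial\widehat{T_B}$, which is condition $(3)$ of Theorem \ref{thVanishing Carleson}; since the vanishing Carleson property established there does not depend on the exponent, $\mu$ is a vanishing Carleson measure for $A_\alpha^q(T_B)$.

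The substantive implication is $(4)\Rightarrow(1)$. From $(4)$ and the exponent-independence in Theorem \ref{thVanishing Carleson}, $\mu$ is a vanishing Carleson measure for every $A_\alpha^s(T_B)$, $1<s<\infty$; in particular it is an ordinary Carleson measure for $A_\alpha^{p'}(T_B)$, so by Theorem \ref{mianth1} the operator $T_\mu$ is bounded on $A_\alpha^p(T_B)$. To prove compactness I would use the criterion, valid since $A_\alpha^p(T_B)$ is reflexive for $1<p<\infty$: it suffices to show $\lVert T_\mu f_j\rVert_{p,\alpha}\to 0$ whenever $\{f_j\}$ is bounded in $A_\alpha^p(T_B)$ and converges to $0$ uniformly on compact subsets of $T_B$ — equivalence of this with compactness follows from Lemma \ref{lem ruo shou lian de deng jia tiao jian} together with weak sequential compactness of bounded sets. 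For such $f_j$, by duality $\lVert T_\mu f_j\rVert_{p,\alpha}=\sup\{|\langle T_\mu f_j,g\rangle|:g\in A_\alpha^{p'}(T_B),\ \lVert g\rVert_{p',\alpha}\le 1\}$, and (first for $f_j,g$ in the dense subspace $S_t$, where Fubini applies exactly as in Section 5 because $\mu\in\mathcal{M}_+$, then by density and boundedness of $T_\mu$) one has $\langle T_\mu f_j,g\rangle=\int_{T_B}f_j\,\overline{g}\,d\mu$. Hölder and the fact that $\mu$ is Carleson for $A_\alpha^{p'}(T_B)$ then give $|\langle T_\mu f_j,g\rangle|\le\lVert f_j\rVert_{L^p(\mu)}\lVert g\rVert_{L^{p'}(\mu)}\lesssim\lVert f_j\rVert_{L^p(\mu)}$. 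Since $\mu$ is a vanishing Carleson measure for $A_\alpha^p(T_B)$ and $\{f_j\}$ is bounded in $A_\alpha^p(T_B)$ and tends to $0$ uniformly on compacts, the definition of vanishing Carleson measure yields $\lVert f_j\rVert_{L^p(\mu)}^p=\int_{T_B}|f_j|^p\,d\mu\to 0$, so $\lVert T_\mu f_j\rVert_{p,\alpha}\to 0$ and $T_\mu$ is compact.

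I expect the main obstacle to be the bookkeeping inside $(4)\Rightarrow(1)$: rigorously justifying the identity $\langle T_\mu f,g\rangle=\int_{T_B}f\,\overline{g}\,d\mu$ for all $f\in A_\alpha^p(T_B)$, $g\in A_\alpha^{p'}(T_B)$ (one must establish it on $S_t$, where the iterated integral converges absolutely — this is precisely where $\mu\in\mathcal{M}_+$ and the growth estimate defining $S_t$ enter — and then pass to the limit using boundedness of $T_\mu$), and checking carefully the reflexivity-based compactness criterion. Every remaining step reduces to the cited Lemmas \ref{lem dengjia}, \ref{lem ceByuanpan}, \ref{lem ruo shou lian de deng jia tiao jian}, \ref{lem zai shng he de da xiao}, \ref{lem ruo shou lian} and to Theorems \ref{thCraleson}, \ref{thVanishing Carleson}, \ref{mianth1}.
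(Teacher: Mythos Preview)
Your proposal is correct and follows essentially the same cycle $(1)\Rightarrow(2)\Rightarrow(3)\Rightarrow(4)\Rightarrow(1)$ as the paper, with the same key ingredients: weak convergence of normalized kernels (Lemma \ref{lem ruo shou lian}) for $(1)\Rightarrow(2)$, the kernel lower bound on Bergman balls for $(2)\Rightarrow(3)$, Theorem \ref{thVanishing Carleson} for $(3)\Rightarrow(4)$, and the duality/H\"older argument $\langle T_\mu f,g\rangle=\int f\bar g\,d\mu$ combined with vanishing Carleson for $(4)\Rightarrow(1)$. Your write-up is in fact more careful than the paper's in a couple of places --- you spell out the pointwise estimate $\widehat{\mu}_r\lesssim\widetilde{\mu}$ directly rather than appealing loosely to Theorem \ref{thVanishing Carleson}, and you flag the Fubini/density justification for the pairing identity --- but the underlying argument is the same.
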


\begin{proof}
	$(1) \Rightarrow (2)$:	
	Assume that $T_\mu$ is a compact operator on $A_\alpha^p(T_B)$ for some $p>1$.
	
	Since \[\left| \tilde{\mu}\left( z \right) \right|\le C\lVert T_{\mu}\left( \frac{K_z}{\lVert K_z \rVert _{p,\alpha}} \right) \rVert _{p,\alpha}\] for all $z\in T_B.$ Combining the compactness of $T_\mu$ and lemma \ref{lem ruo shou lian}, implies $\tilde{\mu}\in C_0(T_B)$.\\
	
	$(2) \Rightarrow (3)$:
	By the definition of $\hat{\mu}_r$ and Theorem \ref{thVanishing Carleson}, the expected results are obtained by the following identity\[V_{\alpha}\left( D\left( z,r \right) \right) =K\rho \left( z \right) ^{n+\alpha +1},\] where K is a constant independent of z.	\\
	
	$(3) \Rightarrow (4)$:
	This conclusion can be drawn from the fact that the vanishing Carleson measure in Theorem \ref{thVanishing Carleson} is not dependent on $p$, so it is trivial.\\
	
	$(4) \Rightarrow (1)$:
	Since $\mu$ is a vanishing Carlson measure for $A_\alpha^q(T_B)$, and according to the vanishing Carlson measure does not depend on the index we know $\mu$ is a vanishing Carlson measure for $A_{\alpha}^{p'}(T_B) $. By Theorem \ref{mianth1}, we know that $T_\mu$ is bounded on $f\in A_\alpha^p(T_B)$.
	
	For any $f\in A_\alpha^p(T_B)$, we have
	$$
	\begin{aligned}
	\lVert T_{\mu}f \rVert _{p,\alpha}&=\text{sup}\{|\left< T_{\mu}f,g \right> |\left| \lVert g \rVert _{p',\alpha}=1 \ in\,\,A_{\alpha}^{p'}\left( T_B \right) \right. \}\\
	&=\text{sup}\left\{ \left| \int\limits_{T_B}{f}\overline{g}d\mu \right|\left| \lVert g \rVert _{p',\alpha}=1 \ in\,\,A_{\alpha}^{p'}\left( T_B \right) \right. \right\}\\
	&\le \lVert f \rVert _{L^p\left( \mu \right)}\text{sup}\left\{ \lVert g \rVert _{L^{p'}\left( \mu \right)}\left| \lVert g \rVert _{p',\alpha}=1 \ in\,\,A_{\alpha}^{p'}\left( T_B \right) \right. \right\} 
	,
	\end{aligned}
	$$
	where the rationality of the second equation uses the Fubini's theorem. Since $\mu$ is a vanishing Carlson measure on $A_{\alpha}^{p'}(T_B) $, the second term of the last inequality is finite. 
	
	Now if $f_j \rightarrow 0$ weakly in $A_\alpha^p(T_B)$, then the compactness of the inclusion mapping implies that $\|f\|_{L^p(\mu)} \rightarrow 0$. It follows that $\lVert T_{\mu}f_j \rVert _{p,\alpha} \rightarrow 0$ and hence $T_\mu$ is compact.  
	
	This completes the proof of Theorem \ref{mianth2}.
\end{proof}

\bibliography{reference}
\bibliographystyle{plain}{}
\end{document}